\documentclass[review,hidelinks,onefignum,onetabnum]{siamart251216}
\usepackage{silence}
\usepackage{lipsum}
\usepackage{amsfonts}
\usepackage{graphicx}
\usepackage{epstopdf}
\usepackage{algorithmic}
\ifpdf
  \DeclareGraphicsExtensions{.eps,.pdf,.png,.jpg}
\else
  \DeclareGraphicsExtensions{.eps}
\fi

\newsiamremark{remark}{Remark}
\newsiamremark{hypothesis}{Hypothesis}
\newsiamthm{assumption}{Assumption}

\crefname{hypothesis}{Hypothesis}{Hypotheses}
\newsiamthm{claim}{Claim}
\newsiamremark{fact}{Fact}
\crefname{fact}{Fact}{Facts}

\headers{Multiscale Spectral distribution}{Eric T. Chung, Lijian Jiang and Yajun Wang}

\title{Topological spectral gap, multiscale Weyl's law, and homogenization in high-contrast PDEs
\thanks{Submitted to the editors DATE.
\funding{Eric T. Chung's work is partially supported by the Hong Kong RGC General Research Fund (Project Nos. 14304525 and 14305624). Lijian Jiang acknowledges the support of the NSFC (Project No. 12271408).}}}

\author{
  Eric T. Chung\thanks{Department of Mathematics, The Chinese University of Hong Kong, Hong Kong SAR
  (\email{eric.t.chung@cuhk.edu.hk}).}
\and Lijian Jiang \thanks{School of Mathematical Sciences, Tongji University, Shanghai, P.R. China
  (\email{ljjiang@tongji.edu.cn}).}
\and Yajun Wang \thanks{Department of Mathematics, The Chinese University of Hong Kong, Hong Kong SAR
  (\email{yajunwang@cuhk.edu.hk}).}
  }

\usepackage{amsopn}

\usepackage{amsmath}
\usepackage{amssymb}
\usepackage{microtype}

\usepackage{subcaption}
\usepackage{graphicx}

\usepackage{booktabs}
\usepackage{multirow}
\usepackage{tabularx} 

\usepackage{verbatim}
\usepackage{microtype}

\ifpdf
\hypersetup{
  pdftitle={Topological spectral gap, multiscale Weyl's law, and homogenization in high-contrast PDEs},
  pdfauthor={Eric T. Chung, Lijian Jiang and Yajun Wang}
}
\fi

\begin{document}

\maketitle

\begin{abstract}
This paper introduces a unified abstract variational and topological framework to characterize the spectral gap and eigenvalue distribution in high-contrast multiscale partial differential equations (PDEs). We rigorously prove that the exact location of the spectral gap is universally determined by the dimension of the local null space associated with the high-contrast inclusions. For systems with infinite-dimensional kernels, this location is strictly determined by the topological Betti numbers. Furthermore, we establish a multiscale Weyl's law via a spectral decoupling theorem, demonstrating that as the contrast approaches infinity, the multiscale spectrum bifurcates into two independent components: the Dirichlet spectrum of the background matrix and the internal Neumann spectrum of the inclusions. Using spectral homogenization theory, we also show that in the limit of vanishing periodicity, this expanding topological eigenspace asymptotically spans the entire spectral space of the macroscopic homogenized operator. These theoretical results are comprehensively verified through numerical experiments on diffusion, elasticity, fourth-order plate, Maxwell, and grad-div operators.
\end{abstract}

\begin{keywords}
high-contrast PDEs, topological spectral gap, De Rham cohomology, spectral decoupling, multiscale Weyl's law, spectral homogenization
\end{keywords}

\begin{MSCcodes}
65M60, 65N30, 35B27
\end{MSCcodes}

\section{Introduction}
\label{sec:intro}
In the modeling of complex multiscale phenomena, one naturally encounters partial differential equations (PDEs) with highly oscillatory and high-contrast coefficients.
These equations govern a wide range of physical systems, including scalar diffusion, linear elasticity in advanced composites, fourth-order plate mechanics, time-harmonic Maxwell's electromagnetics in metamaterials, and grad-div operator in highly heterogeneous porous media.
Accurate numerical simulation of these systems presents significant computational challenges, as the complex microscopic geometric structures and extreme material contrasts must be properly resolved. 

To reduce this computational cost, a variety of advanced multiscale model reduction methods and robust domain decomposition preconditioners have been developed in recent decades.
Classical multiscale methods include homogenization theory \cite{cioranescu1999introduction}, the Multiscale Finite Element Method (MsFEM) \cite{efendiev2009multiscale}, the Variational Multiscale Method (VMS) \cite{hughes1998variational}, and the Heterogeneous Multiscale Method (HMM) \cite{abdulle2012heterogeneous}.
However, to achieve robust approximation in highly heterogeneous and high-contrast media, the core mechanism of many methods increasingly relies on solving local spectral problems.
Representative multiscale model reduction methods driven by this spectral technique include the Generalized Multiscale Finite Element Method (GMsFEM) \cite{efendiev2013generalized,chung2014generalized,chung2023multiscale}, Localized Orthogonal Decomposition (LOD) \cite{maalqvist2014localization,henning2014localized,altmann2021numerical}, Multiscale Generalized Finite Element Method (MS-GFEM) \cite{babuvska2020multiscale,ma2022novel,ma2025unified}, Constraint Energy Minimizing Generalized Multiscale Finite Element Method (CEM-GMsFEM) \cite{chung2018constraint}, and Localized Subspace Iteration (LSI) \cite{guan2025localized}.
Concurrently, to accelerate iterative solvers, robust domain decomposition preconditioners that rely on local generalized eigenvalue problems to construct highly effective coarse spaces have gained significant attention. 
Notable contributions in this area include the pioneering spectral coarse spaces for overlapping Schwarz methods \cite{galvis2010domain, efendiev2012robust}, the GenEO (Generalized Eigenproblems in the Overlaps) framework \cite{spillane2014abstract, dolean2015introduction}, and adaptive spectral coarse spaces for non-overlapping BDDC and FETI-DP algorithms \cite{kim2015bddc, kim2017bddc, chung2021learning, klawonn2015feti}.

However, the robustness and efficiency of these multiscale coarse spaces rely on capturing the dominant low-frequency physics, which requires a rigorous mathematical understanding of the local spectral behavior and the existence of a spectral gap.
Although the existence of the spectral gap has been rigorously established for scalar diffusion problems \cite{galvis2010domain}, this fundamental result was previously treated as a problem-specific phenomenon.
In this paper, we reveal that this spectral behavior is fundamentally governed by the dimension of the operator's local null space.
Motivated by this insight, we introduce a unified variational framework applicable to any high-contrast PDE characterized by a finite-dimensional kernel, extending the spectral theory to complex vector-valued fields (e.g., linear elasticity) and higher-order operators (e.g., biharmonic plates).

Unfortunately, this theoretical framework faces a profound analytical limitation when applied to systems exhibiting infinite-dimensional kernels, such as time-harmonic Maxwell's equations and grad-div operator.
In these systems, the standard dimensionality rules fail. Because the null space is infinite-dimensional, the physical discrete spectrum is hidden by an infinite number of zero eigenvalues.
To mathematically recover the discrete spectrum and extend our unified framework, we employ the theory of Hilbert complexes and algebraic topology \cite{Arnold2006, BottTu1982} as fundamental analytical tools. 
Specifically, by utilizing the De Rham exact sequence and the $L^2$-orthogonal generalized Hodge decomposition, the infinite-dimensional redundant exact forms can be quotiented out. 
This reveals that the effective local energy-free modes are intrinsically isomorphic to finite-dimensional harmonic spaces. 
Crucially, we prove that the dimensions of these harmonic spaces are no longer mere algebraic quantities, but are strictly determined by fundamental topological invariants—specifically, the Betti numbers ($\beta_k$), which precisely quantify the multidimensional holes within the high-contrast inclusions. 
Consequently, we establish the existence of a universal topological spectral gap, demonstrating that its exact location is governed purely by the macroscopic topology of the complex media.

Above the topological spectral gap, characterizing the high-frequency spectrum under the extreme high-contrast limit $\eta \rightarrow \infty$ constitutes another significant challenge. 
Traditional eigenvalue estimates fail to capture the energy localization and internal resonance effects that typically emerge within highly heterogeneous media. 
To resolve this, we establish a spectral decoupling theorem based on a novel contrast-weighted $L^2$-normalization technique. 
We identify a mass concentration dichotomy: as the contrast parameter approaches infinity, the physical energy undergoes a mathematical bifurcation. 
The multiscale spectrum splits into two independent, invariant limiting components: the Dirichlet spectrum of the background matrix and the internal Neumann spectrum of the inclusions. 
By partitioning these disparate wave behaviors, we restructure the eigenvalue counting formulations based on classical Weyl's law to establish a new multiscale Weyl's law. 
This law provides an explicit, quantitative linear superposition of background matrix propagation and isolated inclusion resonance, offering a complete characterization of the high-frequency spectral distribution.

Finally, we extend our analysis to periodic high-contrast PDEs to investigate the connection between the topological spectral gap and homogenization theory. By utilizing two-scale convergence and spectral homogenization theory, we demonstrate a robust band separation where the relative spectral gap ratio remains invariant with respect to the periodicity scale. Furthermore, we establish a spectral completeness theorem, rigorously proving that in the limit of vanishing periodicity, the expanding eigenspace of these topological low-frequency modes asymptotically exhausts the entire infinite-dimensional spectral space of the macroscopic homogenized operator.

The research significance and theoretical implications of this unified framework are threefold. First, it complements and advances the foundational theory of spectral-based multiscale reduction and preconditioning methods, providing crucial analytical tools for contrast-independent deductions, error analysis of GMsFEM, and the convergence rates of localized subspace iterations (LSI). Second, it offers an a priori mechanism to infer the exact location of the spectral gap based purely on the properties of the governing PDEs and material coefficients, thereby enabling a deterministic and optimal selection of coarse spaces before the actual computation. Third, by the De Rham complex and the multiscale Weyl's law, this work expands the multiscale spectral analysis toward complex topological systems and high-frequency regimes, offering a powerful homological-variational tool to analyze wave propagation, localized resonance, and band-gap structures in advanced metamaterials and highly heterogeneous porous media.

The remainder of the paper is organized as follows. \Cref{sec:spectral_gap,sec:d_dimensions} introduce a unified variational framework to characterize the spectral gap for general high-contrast PDEs in $d$-dimensional space, detailing local null space dimensions and coercivity bounds. \Cref{sec:derham} extends this framework to systems with infinite-dimensional kernels (e.g., Maxwell and grad-div operator) by utilizing De Rham complexes and Betti numbers to establish a universal topological spectral gap. \Cref{sec:spec_decoupling} investigates the high-frequency regime, proving a spectral decoupling theorem and formulating a multiscale Weyl's law. \Cref{sec:homogenization} connects these topological modes with spectral homogenization theory to demonstrate robust band separation and spectral completeness. Finally, \cref{sec:num_ex} presents comprehensive numerical validations across various physical models, followed by conclusions in \cref{sec:conclu}.

\section{Problem setting and spectral gap}
\label{sec:spectral_gap}
In this section, we first briefly review the well-established spectral behavior for the scalar diffusion problem. Motivated by these classical results, we then propose a unified abstract variational framework that characterizes the spectral gap for general physical operators, including linear elasticity and fourth-order biharmonic equations.

\subsection{The scalar diffusion problem}
\label{subsec:diffusion_baseline}
Let $\Omega \subset \mathbb{R}^d$ be a bounded domain. We assume the medium contains $M$ disconnected high-contrast inclusions, denoted by $D = \bigcup_{i=1}^M D_i$. The background matrix is defined as $\Omega_0 = \Omega \setminus \overline{D}$. 

The classical high-contrast eigenvalue problem for scalar diffusion seeks $(\lambda^\eta, u^\eta) \in \mathbb{R} \times H_0^1(\Omega)$ such that:
\begin{equation}
    \int_{\Omega} \kappa^\eta(x) \nabla u^\eta \cdot \nabla v \, dx = \lambda^\eta \int_{\Omega} \rho^\eta(x) u^\eta v \, dx, \quad \forall v \in H_0^1(\Omega),
\end{equation}
where the physical coefficient (e.g., conductivity) $\kappa^\eta(x)$ and the density $\rho^\eta(x)$ exhibit a high-contrast:
\begin{equation}
    \kappa^\eta(x), \rho^\eta(x) \sim 
    \begin{cases} 
      \eta \gg 1 & \text{if } x \in D, \\
      1 & \text{if } x \in \Omega_0.
    \end{cases}
\end{equation}

As demonstrated by Yalchin and Galvis (2010) \cite{galvis2010domain}, the asymptotic distribution of the eigenvalues for this specific operator is well-understood. Because the null space of the gradient operator inside each inclusion consists solely of constant functions, each inclusion contributes exactly one localized low-energy mode. Consequently, it has been rigorously proven that:
\begin{equation}
    \lambda_n^\eta = \mathcal{O}(\eta^{-1}) \to 0 \quad \text{for } n = 1, \dots, M,
\end{equation}
while the $(M+1)$-th eigenvalue is uniformly bounded from below by a constant $C$ independent of the contrast $\eta$:
\begin{equation}
    \lambda_{M+1}^\eta \ge C > 0 \quad \text{as } \eta \to \infty,
\end{equation}
This creates a sharp spectral gap at $n = M+1 $.

\subsection{A unified variational framework for general operators}
\label{subsec:unified_framework}
Unlike the scalar diffusion problem, where the spectral gap occurs at $n = M+1$, vector-valued equations and higher-order operators exhibit more complex null spaces. To accommodate these complex null spaces, we now introduce a unified abstract variational framework.

Let $V(\Omega)$ be an appropriate Hilbert space. We consider the abstract generalized spectral problem:
\begin{equation}
\label{eq:general_spectral}
    a^\eta(u, v) = \lambda^\eta m^\eta(u, v), \quad \forall v \in V(\Omega),
\end{equation}
where 
\begin{equation}
\begin{aligned}
    & a^\eta(u, v) = \int_{\Omega} \kappa^{\eta} \langle \mathcal{L} u,\mathcal{L} v \rangle \, dx = \int_{\Omega_0} \kappa^{\eta} \langle \mathcal{L} u,\mathcal{L} v \rangle \, dx + \eta \sum_{i=1}^M \int_{D_i} \frac{\kappa^{\eta}}{\eta} \langle \mathcal{L} u,\mathcal{L} v \rangle, \\
    & m^\eta(u, v) = \int_{\Omega} \rho^{\eta} \langle u, v \rangle \, dx = \int_{\Omega_0} \rho^{\eta} \langle u, v \rangle \, dx + \eta \sum_{i=1}^M \int_{D_i} \frac{\rho^{\eta}}{\eta} \langle u, v \rangle .
    \end{aligned}
\end{equation}
$\mathcal{L}$ is the differential operator depending on the specific problem. We decompose the bilinear forms into background matrix and inclusion components:
\begin{equation}
    a^\eta(u, v) = a_0(u, v) + \eta a_1(u, v), \quad m^\eta(u, v) = m_0(u, v) + \eta  m_1(u, v),
\end{equation}
where $a_0, a_1, m_0, m_1$ are independent of $\eta$.

\textbf{Definition of the local null space dimension ($R$):}
The core of our theory relies on the algebraic dimension of the energy-free modes inside inclusions. We define the local null space $\mathcal{N}(D_i) = \{ u \in V(D_i) \mid \mathcal{L}u = 0 \}$ and denote its dimension by $R = \dim(\mathcal{N}(D_i))$. Depending on physics, $R$ varies significantly:
\begin{enumerate}
    \item \textbf{Scalar diffusion:} $\mathcal{L} = \nabla u $. The kernel consists of constant functions. Thus, \textbf{$R = 1$}.
    \item \textbf{Linear elasticity:} $\mathcal{L} = \varepsilon(u)$. The kernel consists of rigid body modes. Thus, \textbf{$R = 3$} (in 2D).
    \item \textbf{Fourth-order plate:} $\mathcal{L} = \nabla^2 u$. The kernel consists of linear polynomials $\text{span}\{1, x, y\}$. Thus, \textbf{$R = 3$} (in 2D).
\end{enumerate}

Based on this local null space, we impose the following fundamental assumption to guarantee the existence of a spectral gap.
\begin{assumption}[Abstract coercivity]
\label{assum_coer}
There exists a positive coercivity constant $C_{\mathrm{coer}}$, independent of the contrast parameter $\eta$, such that for any inclusion $D_i$ and any test function $u \in V(D_i)$ that is $m_1$-orthogonal to the local null space $\mathcal{N}(D_i)$, the following lower bound holds:
\begin{equation}
    a_1(u,u) \ge C_{\mathrm{coer}} m_1(u,u).
\end{equation}
As will be explicitly detailed in \cref{sec:d_dimensions}, this abstract condition is naturally fulfilled by standard functional analysis theorems, including the Poincaré inequality, the Second Korn's inequality, and the higher-order Bramble-Hilbert lemma.
\end{assumption}

\begin{theorem}[Universal spectral gap]
\label{thm:unified_spectral_gap}
Let $N = R \times M$ be the global null space dimension generated by the $M$ inclusions. As $\eta \to \infty$, the first $N$ eigenvalues collapse to zero: $\lambda_n^\eta \le \mathcal{O}(\eta^{-1})$ for $n = 1, \dots, N$. Furthermore, there exists a constant $C_{\rm gap} > 0$, independent of $\eta$, such that the spectral gap strictly emerges at the $(N+1)$-th eigenvalue: $\lambda_{N+1}^\eta \ge C_{\rm gap}$.
\end{theorem}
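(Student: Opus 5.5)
Both claims follow from the Courant--Fischer min-max characterization of the eigenvalues of \eqref{eq:general_spectral}:
\[
\lambda_n^\eta=\min_{\dim S=n}\max_{u\in S\setminus\{0\}}\frac{a^\eta(u,u)}{m^\eta(u,u)}
=\max_{\dim W=n-1}\min_{u\perp_{m^\eta} W}\frac{a^\eta(u,u)}{m^\eta(u,u)}.
\]

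\textbf{Upper bound for $n\le N$.} I would build an $N$-dimensional trial space from the local kernels. For each $i$ and each basis element $\phi_{i,r}$, $r=1,\dots,R$, of $\mathcal{N}(D_i)$, extend $\phi_{i,r}$ to $\tilde\phi_{i,r}\in V(\Omega)$ through a fixed, $\eta$-independent extension operator into a neighborhood of $D_i$ in $\Omega_0$. Here I assume the inclusions are separated, so these neighborhoods are disjoint and avoid $\partial\Omega$, which respects the boundary conditions. Because $\mathcal{L}\tilde\phi_{i,r}=0$ in $D_i$, the $\eta a_1$ part of the energy vanishes and $a^\eta(u,u)=a_0(u,u)\le C_a\, m_1(u,u)$ on the span $S$. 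The constant $C_a$ is $\eta$-independent, since $S$ is finite dimensional and $a_0$ is $\eta$-independent; equivalence of norms on $S$ uses that $m_1$ is positive definite on $S$, as the restrictions to the $D_i$ are linearly independent. Since $m^\eta(u,u)\ge \eta\, m_1(u,u)$, the Rayleigh quotient on $S$ is bounded by $C_a\eta^{-1}$. Hence $\lambda_n^\eta\le C\eta^{-1}$ for $n\le N$.

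\textbf{Lower bound for $\lambda_{N+1}^\eta$.} Take $W$ in the max-min formula to be the $N$-dimensional space $\bigoplus_i \mathcal{N}(D_i)$ (extended by zero, used only as a set of constraints). The condition $u\perp_{m^\eta}W$ reads $m_1(u|_{D_i},\phi_{i,r})=0$ for all $i,r$, because $m^\eta$ restricted to $D$ equals $\eta m_1$. It then suffices to show $a^\eta(u,u)\ge C\,m^\eta(u,u)$ with $C$ independent of $\eta$ for all such $u$. Assumption~\ref{assum_coer} gives
\[
\eta\, a_1(u,u)\ge C_{\mathrm{coer}}\,\eta\, m_1(u,u),
\]
which controls the inclusion part of the mass. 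For the background part $m_0(u,u)$, I would use a Friedrichs/Poincaré-type inequality on the whole domain, $a_0(u,u)+a_1(u,u)\ge c\,(m_0(u,u)+m_1(u,u))$ for $u\in V(\Omega)$, which holds by the essential boundary conditions on $\partial\Omega$ (for instance the $H_0^1$, $H_0^2$, or Korn setting). Combining, for $\eta\ge1$,
\[
a^\eta(u,u)\ge \tfrac12\bigl(a_0+a_1\bigr)(u,u)+\tfrac12\eta\, a_1(u,u)\ge \tfrac{c}{2}\, m_0(u,u)+\tfrac{C_{\mathrm{coer}}}{2}\,\eta\, m_1(u,u),
\]
so $\lambda_{N+1}^\eta\ge C_{\rm gap}:=\tfrac12\min(c,C_{\mathrm{coer}})$.

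\textbf{Main obstacle.} The global inequality bounding $m_0$ is not stated in Assumption~\ref{assum_coer}, so I would make it an explicit, standard hypothesis on $V(\Omega)$, namely a global Poincaré/Korn/Friedrichs inequality. The extension step also needs the mild geometric hypothesis that the inclusions lie strictly inside $\Omega$ and are mutually separated. With these two hypotheses in place, both bounds are $\eta$-independent and the argument is complete.
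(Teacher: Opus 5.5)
Your proof is correct under the two hypotheses you make explicit. It follows the same Courant--Fischer skeleton as the paper: a null-space trial space for the upper bound, and null-space constraints plus Assumption~\ref{assum_coer} for the lower bound. It departs from the paper exactly where the paper's argument breaks down.

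In Step~1 the paper uses the \emph{zero} extension of $\mathcal{N}(D_i)$, which is not in $V(\Omega)$ (the zero extension of a constant jumps across $\partial D_i$). Your cutoff extension, together with the finite-dimensional bound $a_0(u,u)\le C_a\, m_1(u,u)$ on $S$, is what legitimately gives $C\eta^{-1}$. It is also the construction the paper itself uses in Step~1 of Theorem~\ref{thm:asy_spec_gap}.

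In Step~2 the paper drops $a_0$ and obtains $R^\eta(u^*)\ge \eta C_{\rm coer}m_1(u^*,u^*)/(m_0(u^*,u^*)+\eta m_1(u^*,u^*))$. It then lets $\eta\to\infty$ for each fixed $u^*$. This bound is not uniform over the constraint set: for $u^*$ supported in $\Omega_0$ it equals $0$. So the background mass $m_0$ is never controlled. Your global Friedrichs-type inequality, combined with the splitting $a^\eta\ge\tfrac12(a_0+a_1)+\tfrac12\eta\, a_1$ (the same device as in Step~2 of Theorem~\ref{thm:asy_spec_gap}), supplies exactly this control. It yields the $\eta$-uniform constant $\tfrac12\min(c,C_{\rm coer})$.

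The paper's route would give the cleaner constant $C_{\rm gap}=C_{\rm coer}$ from the local assumption alone, but that constant is not correct in general. Already for scalar diffusion, test with the $N$ extended null modes together with the first Dirichlet eigenfunction of $\Omega_0$ extended by zero. This gives $\limsup_{\eta\to\infty}\lambda_{N+1}^\eta\le\lambda_1^{\rm Dir}(\Omega_0)$. That value lies below the optimal $C_{\rm coer}$ (the smallest nonzero Neumann eigenvalue of the inclusions) when the inclusions are small. Some background coercivity like yours is therefore unavoidable.

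Your extra hypothesis is also mild. It is only needed on the constrained subspace where each $u|_{D_i}$ is $m_1$-orthogonal to $\mathcal{N}(D_i)$. For the diffusion, elasticity, and plate examples on a connected $\Omega$, it holds there by the usual compactness argument, even without essential boundary conditions.
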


\begin{proof}
The proof relies on the Courant-Fischer Min-Max and Max-Min principles \cite{horn2012matrix}. The generalized Rayleigh quotient is given by
\begin{equation}
     R^\eta(u) = \frac{a_0(u, u) + \eta a_1(u, u)}{m_0(u, u) + \eta m_1(u, u)}.
\end{equation}

\textbf{Step 1: upper bound for $n \le R \times M$.} 
Let $\tilde{\mathcal{N}}(D_i)$ be the zero-extension of the local null space $\mathcal{N}(D_i)$ to the global domain $\Omega$. We define the $N$-dimensional global null space $S_N = \bigoplus_{i=1}^M \tilde{\mathcal{N}}(D_i)$. For any $u \in S_N$, $\mathcal{L}u = 0$ inside all inclusions, which yields $a_1(u,u) = 0$. By the Min-Max principle, for any $n \le N$, we can choose $S_n \subseteq S_N$ to obtain
\begin{equation}
    \lambda_n^\eta \le \max_{u \in S_N, u \ne 0} R^\eta(u) = \max_{u \in S_N, u \ne 0} \frac{a_0(u,u)}{m_0(u,u) + \eta m_1(u,u)} \le \frac{C}{\eta} = \mathcal{O}(\eta^{-1}).
\end{equation}

\textbf{Step 2: lower bound for $n = R \times M + 1$.} 
To establish the strict lower bound, we utilize the Max-Min formulation:
\begin{equation}
    \lambda_{N+1}^\eta = \max_{\dim(V_N)=N} \min_{u \perp V_N, u \ne 0} R^\eta(u).
\end{equation}
By choosing the specific $N$-dimensional subspace $V_N = S_N$, any valid test function $u^* \perp S_N$ naturally satisfies the local orthogonality condition $u^*|_{D_i} \perp \mathcal{N}(D_i)$ for all $i=1,\dots,M$. According to Assumption \ref{assum_coer}, the abstract coercivity holds:
\begin{equation}
    a_1(u^*,u^*) \ge C_{coer} m_1(u^*,u^*).
\end{equation}
Since the background energy is non-negative ($a_0(u^*,u^*) \ge 0$), the Rayleigh quotient for $u^*$ is rigorously bounded from below:
\begin{equation}
    R^\eta(u^*) \ge \frac{\eta a_1(u^*,u^*)}{m_0(u^*,u^*) + \eta m_1(u^*,u^*)} \ge \frac{\eta C_{coer} m_1(u^*,u^*)}{m_0(u^*,u^*) + \eta m_1(u^*,u^*)}.
\end{equation}
Dividing the numerator and denominator by $\eta m_1(u^*,u^*)$ and taking the asymptotic limit as $\eta \rightarrow \infty$, we obtain
\begin{equation}
    \lim_{\eta \rightarrow \infty} R^\eta(u^*) \ge \lim_{\eta \rightarrow \infty} \frac{C_{coer}}{\frac{m_0(u^*,u^*)}{\eta m_1(u^*,u^*)} + 1} = C_{coer}.
\end{equation}
Consequently, by the Max-Min principle, the eigenvalue satisfies $\lambda_{N+1}^\eta \ge C_{\rm gap} := C_{\rm coer} > 0$, which completes the proof.
\end{proof}

\section{Generalization and coercivity theorem in \texorpdfstring{$d$} --dimensions}
\label{sec:d_dimensions}

In \cref{sec:spectral_gap}, the abstract coercivity constant $C_{\rm coer}$ played a key role in ensuring the existence of the spectral gap independent of the contrast $\eta$. In this section, we specialize this abstract framework to general $d$-dimensional spatial domains $\Omega \subset \mathbb{R}^d$. 
We demonstrate how the local null space dimension $R_d$ varies with the spatial dimension $d$, and we identify the fundamental functional analysis theorems that provide the exact lower coercivity bounds for diffusion, elasticity and fourth-order problems.

\subsection{Scalar diffusion and the Poincaré inequality}
\label{sec:d_dimensions_diffu}
For the $d$-dimensional scalar diffusion (or thermal conduction) problem, the unknown is a scalar field $u \in H^1(\Omega)$.
\begin{itemize}
    \item \textbf{Differential operator:} The energy is governed by the gradient operator, $\mathcal{L}u = \nabla u$. The corresponding physical PDE in strong form is the classical high-contrast eigenvalue problem:
    \begin{equation}
        -\nabla \cdot (\kappa^\eta \nabla u) = \lambda^\eta \rho^\eta u \quad \text{in } \Omega.
    \end{equation}
    \item \textbf{Null space and dimension ($R_d$):} The local null space inside an inclusion $D_i$ is the space of functions with zero gradient, which consists of constant functions:
    \begin{equation}
        \mathcal{N}(D_i) = \{ u \in H^1(D_i) \mid \nabla u = \mathbf{0} \} = \text{span}\{1\}.
    \end{equation}
    Consequently, the dimension of the null space is invariant to the spatial dimension: $\mathbf{R_d = 1}$ for all $d \ge 1$.
    \item \textbf{Coercivity theorem:} The spectral gap lower bound $C_{\rm coer}$ is provided by the classical \textbf{Poincaré-Wirtinger inequality} \cite{evans2022partial}. For any $u \in H^1(D_i)$, if $u$ is orthogonal to the null space (i.e., its mean value $\bar{u}_{D_i} = 0$), then there exists a constant $C_P > 0$ depending only on the geometry of $D_i$ such that
    \begin{equation}
        \int_{D_i} |\nabla u|^2 \, dx \ge C_P \int_{D_i} |u|^2 \, dx.
    \end{equation}
\end{itemize}

\subsection{Linear elasticity and Korn's inequality}
For $d$-dimensional linear elasticity, the unknown is a vector displacement field $\mathbf{u} = (u_1, \dots, u_d)^T \in [H^1(\Omega)]^d$.
\begin{itemize}
    \item \textbf{Differential operator:} The elastic strain energy is formulated in terms of the symmetric gradient (strain tensor), $\mathcal{L}\mathbf{u} = \boldsymbol{\varepsilon}(\mathbf{u}) = \frac{1}{2}(\nabla \mathbf{u} + \nabla \mathbf{u}^T)$. We consider a simplified isotropic elastodynamic model where the material stiffness is characterized by a highly heterogeneous scalar coefficient $\kappa^\eta(\mathbf{x})$. The corresponding high-contrast eigenvalue problem is
    \begin{equation}
        -\nabla \cdot (\kappa^\eta \boldsymbol{\epsilon}(\mathbf{u})) = \lambda^\eta \rho^\eta \mathbf{u} \quad \text{in } \Omega.
    \end{equation}
    \item \textbf{Null space and dimension ($R_d$):} The condition $\boldsymbol{\varepsilon}(\mathbf{u}) = \mathbf{0}$ characterizes the space of Rigid Body Modes (RBM). In $\mathbb{R}^d$, a rigid body mode can be expressed as $\mathbf{u}(\mathbf{x}) = \mathbf{c} + \mathbf{W}\mathbf{x}$, where $\mathbf{c} \in \mathbb{R}^d$ is a translation vector and $\mathbf{W} \in \mathbb{R}^{d \times d}$ is a skew-symmetric matrix representing infinitesimal rotations.
    The number of translational degrees of freedom is $d$, and the number of independent rotational degrees of freedom (the dimension of $d \times d$ skew-symmetric matrices) is $d(d-1)/2$. Thus, the total null space dimension varies quadratically with $d$:
    \begin{equation}
        \mathbf{R_d} = d + \frac{d(d-1)}{2} = \mathbf{\frac{d(d+1)}{2}}.
    \end{equation}
    Specifically, $R_2 = 3$ (2D elasticity) and $R_3 = 6$ (3D elasticity).
    \item \textbf{Coercivity theorem:} The existence of the spectral gap relies on the \textbf{Second Korn's inequality} \cite{ciarlet1994three}. For any $\mathbf{u} \in [H^1(D_i)]^d$ orthogonal to the RBM space, there exists a constant $C_K > 0$ such that:
    \begin{equation}
        \int_{D_i} |\boldsymbol{\varepsilon}(\mathbf{u})|^2 \, dx \ge C_K \int_{D_i} |\mathbf{u}|^2 \, dx.
    \end{equation}
\end{itemize}

\subsection{Fourth-order problems and the Bramble-Hilbert lemma}
For fourth-order problems (e.g., Kirchhoff plates in 2D, or generalized phase-field/strain-gradient models in 3D), the energy functional involves the second-order derivatives. The unknown is $u \in H^2(\Omega)$.
\begin{itemize}
    \item \textbf{Differential operator:} The energy is governed by the Hessian matrix, $\mathcal{L}u = \nabla^2 u$. The corresponding strong form is the fourth-order biharmonic-type eigenvalue problem:
    \begin{equation}
        \nabla \cdot \nabla \cdot (\kappa^\eta \nabla^2 u) = \lambda^\eta \rho^\eta u \quad \text{in } \Omega.
    \end{equation}
    \item \textbf{Null space and dimension ($R_d$):} The condition $\nabla^2 u = \mathbf{0}$ implies that all second derivatives vanish. Thus, the local null space comprises all linear polynomials:
    \begin{equation}
        \mathcal{N}(D_i) = \mathcal{P}_1(D_i) = \text{span}\{1, x_1, x_2, \dots, x_d\}.
    \end{equation}
    Physically, these modes represent rigid planar/hyperplanar translations and tilts. The dimension of this polynomial space is linear with $d$:
    \begin{equation}
        \mathbf{R_d = d + 1}.
    \end{equation}
    Specifically, $R_2 = 3$ (2D plates) and $R_3 = 4$ (3D polyharmonic operators).
    \item \textbf{Coercivity theorem:} The abstract coercivity $C_{\rm coer}$ for this higher-order space is strictly provided by a special case of the \textbf{Bramble-Hilbert Lemma} (or the Higher-Order Poincaré Inequality) \cite{brenner2008mathematical}. For any $u \in H^2(D_i)$ that is $L^2$-orthogonal to $\mathcal{P}_1(D_i)$, there exists a constant $C_{BH} > 0$ such that:
    \begin{equation}
        \int_{D_i} |\nabla^2 u|^2 \, dx \ge C_{BH} \int_{D_i} |u|^2 \, dx.
    \end{equation}
\end{itemize}

\cref{tab:dimension_scaling} summarizes the universal $n = R_d \times M + 1$ rule. This table provides explicit guidelines for designing robust multiscale coarse spaces in any spatial dimension.
\begin{table}[htbp]
    \centering
    \caption{The null space dimension $R_d$ and the exact location of the spectral gap $n$ for $M$ inclusions in $d$-dimensional space.}
    \label{tab:dimension_scaling}
    \renewcommand{\arraystretch}{1.4}
    \begin{tabularx}{\textwidth}{lXXX} 
        \toprule
        \textbf{PDE system} & \textbf{Null space dim. ($R_d$)} & \textbf{Gap loc. (2D)} & \textbf{Gap loc. (3D)} \\
        \midrule
        Scalar diffusion & $1$ & $M + 1$ & $M + 1$ \\
        Linear elasticity & $\frac{d(d+1)}{2}$ & $3M + 1$ & $6M + 1$ \\
        Fourth-order plate & $d + 1$ & $3M + 1$ & $4M + 1$ \\
        \bottomrule
    \end{tabularx}
\end{table}

\section{De Rham complexes, Betti numbers, and the topological spectral gap}
\label{sec:derham}
The unified framework presented in \cref{sec:spectral_gap} and \cref{sec:d_dimensions} relies on the fact that the local null space dimension $R_d$ of the differential operator is finite, which classically corresponds to polynomial degrees or rigid body motions. 
However, when this framework is applied to systems like time-harmonic Maxwell's equation or grad-div operator, the paradigm fails. The classical dimensionality rule breaks down due to infinite-dimensional null spaces, requiring a shift from standard algebraic vector spaces to the theory of Hilbert complexes and algebraic topology \cite{Arnold2006, BottTu1982}.

\subsection{The De Rham complex and infinite-dimensional kernels}
Let $\Omega \subset \mathbb{R}^3$ be a bounded domain. The fundamental function spaces associated with these physical systems constitute the standard de Rham Hilbert complex:
\begin{equation}\label{eq:derham}
H^1(\Omega) \xrightarrow{\nabla} H(\text{curl}; \Omega) \xrightarrow{\nabla\times} H(\text{div}; \Omega) \xrightarrow{\nabla\cdot} L^2(\Omega).
\end{equation}

To abstract this, we consider a sequence of Hilbert spaces $\{V_k\}_{k=0}^3$ equipped with densely defined, closed linear operators $d_k : V_k \rightarrow V_{k+1}$ (where $d_0 = \nabla$, $d_1 = \nabla\times$, $d_2 = \nabla\cdot$). 
The fundamental homological property of this complex is the chain complex condition:
\begin{equation}
d_k \circ d_{k-1} = 0 \quad \implies \quad \text{Im}(d_{k-1}) \subseteq \text{Ker}(d_k).
\end{equation}

To connect this abstract complex to physical phenomena, we instantiate the corresponding high-contrast eigenvalue problems for $k=1$ and $k=2$:
\begin{itemize}
    \item \textbf{Time-harmonic Maxwell's equations ($k=1$):} The relevant energy space is $V_1 = H(\operatorname{curl}; \Omega)$, and the differential operator is $d_1 = \nabla \times$. The corresponding strong form is the curl-curl eigenvalue problem governing the electric field $\mathbf{u}$ in electromagnetic metamaterials:
    \begin{equation}
        \nabla \times (\kappa^\eta \nabla \times \mathbf{u}) = \lambda^\eta \rho^\eta \mathbf{u} \quad \text{in } \Omega,
    \end{equation}
    where $\kappa^\eta$ represents the highly heterogeneous inverse magnetic permeability and $\rho^\eta$ denotes the electric permittivity.

    \item \textbf{Grad-div operator ($k=2$):} The relevant energy space is $V_2 = H(\operatorname{div}; \Omega)$, and the differential operator is $d_2 = \nabla \cdot$. The corresponding strong form is 
    \begin{equation}
        -\nabla (\kappa^\eta \nabla \cdot \mathbf{u}) = \lambda^\eta \rho^\eta \mathbf{u} \quad \text{in } \Omega.
    \end{equation}
\end{itemize}

For a high-contrast inclusion $D_i$, the local bilinear form measures the energy associated with the operator $d_k$. The local null space is precisely $\text{Ker}(d_k|_{D_i})$.
Due to the chain complex condition, this kernel contains the entire image of the preceding operator, $\text{Im}(d_{k-1}|_{D_i})$. Consequently, for $k \ge 1$, the null space is strictly infinite-dimensional. 
If the generalized eigenvalue problem \cref{eq:general_spectral} is solved directly in $V_k$, the discrete spectrum will be masked by an essential spectrum of infinitely many zero eigenvalues, rendering the physical spectral gap unobservable.

\subsection{Hodge decomposition and the cohomology group}
To recover discrete spectrum and extract the meaningful low-frequency physics, we must quotient out the infinite-dimensional redundant spaces. 
For the local domain $D_i$, we define the $k$-th continuous cohomology group $H^k(D_i)$ as the quotient space of the closed forms $Z^k = \text{Ker}(d_k|_{D_i})$ by the exact forms $B^k = \text{Im}(d_{k-1}|_{D_i})$ \cite{Hatcher2002}:
\begin{equation}
H^k(D_i) := \frac{Z^k}{B^k} = \frac{\text{Ker}(d_k|_{D_i})}{\text{Im}(d_{k-1}|_{D_i})}.
\end{equation}
To realize this abstract quotient space in a functional analysis setting, we employ the Hodge decomposition \cite{Arnold2006}. 
The space $V_k(D_i)$ admits the direct sum decomposition:
\begin{equation}
V_k(D_i) = \text{Im}(d_{k-1}) \oplus \mathcal{H}_k(D_i) \oplus \text{Im}(d_k^*),
\end{equation}
where $d_k^*$ denotes the formal adjoint of $d_k$, and $\mathcal{H}_k(D_i)$ represents the space of harmonic forms. 
By Hodge theory, every equivalence class in the cohomology group $H^k(D_i)$ contains a unique harmonic representative. Thus, there is a canonical isomorphism $\mathcal{H}_k(D_i) \cong H^k(D_i)$. 
By restricting our global variational problem to the orthogonal complement $W \subset V_k(\Omega)$ defined by $u \perp \text{Im}(d_{k-1})$, we filter out the exact forms. Consequently, the effective local null space reduces exactly to the harmonic space $\mathcal{H}_k(D_i)$.

\subsection{Betti numbers and the dimension of harmonic spaces}
The fundamental bridge between our analytical framework and pure topology is provided by the celebrated De Rham Theory \cite{BottTu1982}. It reveals that the analytically defined cohomology group $H^k(D_i)$ completely captures the macroscopic shape of the domain. Consequently, the algebraic dimension of the harmonic space is no longer merely an abstract analytical quantity. Instead, it is strictly determined by a topological invariant—the $k$-th Betti number ($\beta_k$), which simply counts the multidimensional holes within $D_i$:
\begin{equation}
\dim(\mathcal{H}_k(D_i)) = \dim(H^k(D_i)) = \beta_k(D_i).
\end{equation}
This provides a rigorous quantification of the local energy-free modes based purely on the domain geometry \cite{edelsbrunner2010computational}:
\begin{itemize}
    \item \textbf{0-Forms ($k=0$, $d_0=\nabla$):} $\beta_0$ counts the connected components. For a solid inclusion, $\beta_0=1$, recovering $R=1$ in \cref{sec:d_dimensions_diffu}.
    \item \textbf{1-Forms ($k=1$, $d_1=\nabla\times$):} $\beta_1$ counts the unshrinkable 1D topological tunnels (genus). For a solid sphere, $\beta_1=0$; for a torus, $\beta_1=1$.
    \item \textbf{2-Forms ($k=2$, $d_2=\nabla\cdot$):} $\beta_2$ counts the 2D enclosed cavities. For a solid block, $\beta_2=0$; for a hollow shell, $\beta_2=1$.
\end{itemize}

\subsection{The universal topological spectral gap}
With the continuous spaces defined, we establish the topological equivalent of the universal spectral gap theorem. 
To explicitly factor out the infinite-dimensional essential spectrum associated with the exact forms, we define the gauge-fixed space $W$ as the $L^2$-orthogonal complement of the image of the preceding operator $d_{k-1}$:
\begin{equation}
    W := \{ u \in V_k(\Omega) \mid (u, d_{k-1} p)_{L^2(\Omega)} = 0, \quad \forall p \in V_{k-1}(\Omega) \}.
\end{equation}
For example, in the case of Maxwell equations ($k=1$), this restricts the test space to divergence-free vector fields.

\begin{theorem}[Topological spectral gap]
\label{thm:homo_spectral_gap}
Let $N_{\rm topo} = \sum_{i=1}^{M}\beta_k(D_i)$ be the aggregate $k$-th Betti number across all $M$ inclusions. As the contrast $\eta \rightarrow \infty$, the first $N_{\rm topo}$ eigenvalues in the gauge-fixed space collapse to zero: $\lambda_n^\eta \le \mathcal{O}(\eta^{-1})$ for $ n= 1, \dots, N_{\rm topo}$. Furthermore, the spectral gap rigorously emerges at the $(N_{\rm topo}+1)$-th eigenvalue, strictly bounded from below by a constant independent of $\eta$.
\end{theorem}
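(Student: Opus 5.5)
The plan is to reduce \cref{thm:homo_spectral_gap} to the proof pattern of \cref{thm:unified_spectral_gap}. We work with the restricted problem $a^\eta(u,v)=\lambda^\eta m^\eta(u,v)$ for $u,v\in W$, where $a^\eta(u,v)=\int_\Omega \kappa^\eta \langle d_k u, d_k v\rangle$. The local null space $\mathcal{N}(D_i)$ is replaced by the harmonic space $\mathcal{H}_k(D_i)$, whose dimension is $\beta_k(D_i)$ by the De Rham isomorphism of the previous subsection. This makes the counting $N_{\rm topo}=\sum_i \beta_k(D_i)$ automatic. The remaining work is to verify the two ingredients that \cref{thm:unified_spectral_gap} used: an $N_{\rm topo}$-dimensional test space in $W$ with $a_1=0$, and a local coercivity estimate playing the role of Assumption~\ref{assum_coer}.

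\textbf{Step 1 (upper bound).} For each harmonic field $h\in\mathcal{H}_k(D_i)$ we construct a global extension $\tilde h\in W$ with $d_k\tilde h=0$ in every inclusion and $\tilde h|_{D_i}=h$.
\begin{itemize}
\item Zero extension, which sufficed in Section~2, is not admissible here: for $k=1$ the tangential trace would jump, and for $k=2$ the normal trace would.
\item Instead, we first take any extension $E h\in V_k(\Omega)$ that equals $h$ on $D_i$, vanishes on the other inclusions, and has bounded $a_0$-energy. For $k=1$ this can be built by cutting off in a tubular neighbourhood; the cut-off produces curl only in $\Omega_0$.
\item We then project out the exact part: $\tilde h = Eh - d_{k-1}p$, with $p$ solving $(d_{k-1}p, d_{k-1}q)_{\rho}=(Eh,d_{k-1}q)_{\rho}$. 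Since $d_k d_{k-1}=0$, this leaves $d_k\tilde h=Eh$'s curl (or divergence) unchanged, so $a_1(\tilde h,\tilde h)=0$.
\item For consistency the gauge in $W$ should be taken in the $\rho^\eta$-weighted inner product, so that the constraint matches $m^\eta$. I would make this adjustment explicitly.
\end{itemize}
The span $S_{N_{\rm topo}}$ of these $\tilde h$ is $N_{\rm topo}$-dimensional, because restriction to $\bigcup D_i$ and projection onto the harmonic spaces recovers the original basis. The min-max principle then gives $\lambda_n^\eta\le \max_{S} a_0/(m_0+\eta m_1)\le C/\eta$ for $n\le N_{\rm topo}$, provided $m_1(\tilde h,\tilde h)$ stays bounded below. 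This holds because the projection is $\rho$-orthogonal to exact forms, so it does not annihilate the harmonic part on $D_i$.

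\textbf{Step 2 (lower bound).} We use the max-min principle with $V_{N}=S_{N_{\rm topo}}$. Take $u\in W$ with $u\perp_{m^\eta} S$. Restricted to $D_i$, $u$ has a Hodge decomposition $u|_{D_i}= d_{k-1}p_i + h_i + z_i$, with $z_i\in \operatorname{Im}(d_k^*)$. Three estimates are needed.
\begin{itemize}
\item \emph{Coercive part.} The Poincaré inequality for Hilbert complexes (closed range of $d_k$ on bounded Lipschitz $D_i$) gives $a_1(u,u)\ge C_P\, m_1(z_i,z_i)$. This is the analogue of Assumption~\ref{assum_coer}.
\item \emph{Harmonic part.} Orthogonality to $S$ should force $h_i$ to be small. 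I would control $m_1(h_i,h_i)\lesssim m_0(u,u)/\eta + \text{(coupling)}$, and then split into the cases $m_1(u,u)\ge \delta\, m_0(u,u)/\eta$ and its complement, as in the limit argument of \cref{thm:unified_spectral_gap}.
\item \emph{Local exact part.} The global gauge $u\perp d_{k-1}V_{k-1}(\Omega)$ implies $u|_{D_i}\perp d_{k-1}V_{k-1}(D_i)$, at least for test potentials supported in $D_i$. Hence $p_i$ vanishes up to the harmonic-with-boundary subtleties.
\end{itemize}
Combining these gives $R^\eta(u)\ge C_{\rm coer}/(1+o(1))$, which yields the gap.

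\textbf{Main obstacle.} I expect the hardest part to be the interplay between local Hodge decompositions on $D_i$ and the global gauge on $\Omega$. Compactly supported potentials only kill the part of $u|_{D_i}$ that is exact with zero trace. Consequently the relevant local harmonic space is a mixed one: either $\mathcal{H}_k$ with natural boundary conditions or relative cohomology, and its dimension may be $\beta_k$ or a relative Betti number $\beta_{3-k}$. My first attempt would be to show that the remaining exact component $d_{k-1}p_i$ with nonzero trace is controlled by the background mass $m_0$ through a trace or extension estimate, so that it is $O(\eta^{-1})$ relative to $\eta m_1$ and is absorbed in the limit. If this fails, the statement should be read with $\beta_k$ interpreted as the dimension of the natural-boundary harmonic space matching the gauge chosen.
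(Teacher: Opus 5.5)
Your Step~1 is sound, and it actually repairs the paper's argument. The paper simply zero-extends $\mathcal{H}_k(D_i)$. As you say, that extension is not in $V_k(\Omega)$ when $h$ has a nonzero tangential trace ($k=1$, e.g.\ a torus) or normal trace ($k=2$, e.g.\ a shell). Your lower bound on $m_1(\tilde h,\tilde h)$ is immediate: $h\perp d_{k-1}V_{k-1}(D_i)$ in $L^2(D_i)$, so $\|h-d_{k-1}p\|_{L^2(D_i)}\ge\|h\|_{L^2(D_i)}$ for every $p$.

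Your switch to the $\rho^\eta$-weighted gauge is also necessary, not cosmetic: with the paper's unweighted $W$ the statement fails. Take $k=1$ and a single ball inclusion ($\beta_1=0$). Let $u=\nabla\psi$ in $D$ with $\psi$ a harmonic polynomial, and extend it by a divergence-free $H^1_0$ field into $\Omega_0$ (Bogovskii, since $\int_{\partial D}\partial_n\psi=0$). These fields lie in the unweighted $W$, are curl-free in $D$, and carry mass $\eta\|\nabla\psi\|^2_{L^2(D)}$. Hence $\lambda_n^\eta\le C_n/\eta$ for every fixed $n$, contradicting the predicted gap $\lambda_1^\eta\ge C$.

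The paper's Step~2 passes over exactly this point. It asserts $u^*|_{D_i}\perp\mathcal{H}_k(D_i)$ and then applies the closed-range estimate on $D_i$, as if $u^*|_{D_i}$ were orthogonal to all of $\ker(d_k|_{D_i})$. Your ``main obstacle'' is the right diagnosis.

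Nevertheless, your Step~2 is not yet a proof. The exact part is in fact controlled by the weighted gauge. For $k=1$ the gauge yields $\operatorname{div}u=0$ in each $D_i$ and in $\Omega_0$, plus the transmission condition $\eta\,u|_D\cdot n=u|_{\Omega_0}\cdot n$ on $\partial D$. So in the natural-boundary Hodge splitting $u|_{D_i}=\nabla\phi_i+h_i+z_i$, $\phi_i$ is harmonic with $\partial_n\phi_i=\eta^{-1}u|_{\Omega_0}\cdot n$. Hence $\eta\|\nabla\phi_i\|^2_{L^2(D_i)}\lesssim\eta^{-1}m_0(u,u)$. Your ``first attempt'' therefore works, no relative cohomology enters, and the count $\beta_k(D_i)$ stands. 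Likewise, since $m^\eta(u,\tilde h_j)=m^\eta(u,Eh_j)$ for $u\in W$, orthogonality to $S$ gives $\eta\|h_i\|^2_{L^2(D_i)}\lesssim\eta^{-1}m_0(u,u)$.

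All of this only yields $\eta\,m_1(u,u)\lesssim a^\eta(u,u)+\eta^{-1}m_0(u,u)$. The genuine gap is a bound $m_0(u,u)\lesssim a^\eta(u,u)$ uniform in $\eta$. Deferring to \cref{thm:unified_spectral_gap} does not supply it: that argument uses only $a_0\ge0$ and a fixed-$u$ limit, so it gives nothing when the mass sits in $\Omega_0$.

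Indeed, your final bound $R^\eta(u)\ge C_{\rm coer}/(1+o(1))$ is false already for $k=0$. Take one disk $D$ of radius $r$ in $(0,1)^2$ and test with the inclusion mode and the zero-extended first Dirichlet mode of $\Omega_0$. This gives $\lambda_2^\eta\le\lambda_1^{\rm Dir}(\Omega_0)+o(1)$, which stays bounded as $r\to0$, while $C_{\rm coer}=\mu_2^{\rm Neu}(D)\sim r^{-2}$.

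What is missing is a background Friedrichs-type estimate, whose obstruction is the topology of $\Omega_0$. For $k=1$ (with $V_1=H_0(\mathrm{curl};\Omega)$), argue by contradiction. Take $m^\eta(u,u)=1$, $u\perp_{m^\eta}S$ and $a^\eta(u,u)\to0$.
\begin{itemize}
\item By the bounds above, $\eta\,m_1(u,u)\to0$, so $\|u\|_{L^2(\Omega_0)}\to1$ and $u|_D\to0$ in $H(\mathrm{curl};D)$.
\item By compactness, after lifting the vanishing tangential trace, $u|_{\Omega_0}$ converges to a Dirichlet harmonic field $\nabla w$ on $\Omega_0$, with $w$ harmonic and locally constant on $\partial\Omega_0$.
\item The net flux through each $\partial D_i$ is zero, inherited from $\operatorname{div}u|_{D_i}=0$ and the transmission condition. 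This forces $w\equiv0$, a contradiction.
\end{itemize}
For $k=2$ the obstruction is instead the Neumann harmonic fields of $\Omega_0$, which are removed by the vanishing circulations along loops on $\partial D_i$. The gap constant therefore depends on the background geometry, not on $C_{\rm coer}$ alone.
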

\begin{proof}
The proof of this theorem is similar to that of \cref{thm:unified_spectral_gap}.

\textbf{Step 1: Upper bound for $n \le N_{\rm topo}$.}
We construct an $N_{\rm topo}$-dimensional global test space $S_{N_{\rm topo}} = \bigoplus_{i=1}^M \tilde{\mathcal{H}}_k(D_i) \subset W$, where $\tilde{\mathcal{H}}_k(D_i)$ denotes the zero-extension of the local harmonic space to the global domain $\Omega$. Since any $u \in S_{N_{ \rm topo}}$ satisfies $d_k u = 0$ strictly inside every inclusion, the high-contrast penalty exactly vanishes: $\|d_k u\|_{L^2(D_i)}^2 = 0$. By the Min-Max principle, for any $n \le N_{\rm topo}$, evaluating the Rayleigh quotient on $S_{N_{ \rm topo}}$ yields:
\begin{equation*}
    \lambda_n^\eta \le \max_{u \in S_{N_{\rm topo}}, u \ne 0} R^\eta(u) = \max_{u \in S_{N_{ \rm topo}}, u \ne 0} \frac{\|d_k u\|_{L^2(\Omega_0)}^2}{\|u\|_{L^2(\Omega_0)}^2 + \eta \sum_{i=1}^M \|u\|_{L^2(D_i)}^2} = \mathcal{O}(\eta^{-1}).
\end{equation*}
$$$$

\textbf{Step 2: Lower bound for $n = N_{\rm topo} + 1$.}
To establish the strict location of the spectral gap, we utilize the Max-Min formulation:
\begin{equation*}
    \lambda_{N_{\rm topo}+1}^\eta = \max_{\dim(V)=N_{\rm topo}} \min_{u \perp V, u \ne 0} R^\eta(u).
\end{equation*}
By choosing the specific subspace $V = S_{N_{ \rm topo}}$, any valid test function $u^* \in W$ must satisfy the local orthogonality constraint $u^*|_{D_i} \perp \mathcal{H}_k(D_i)$ for all $i=1,\dots,M$. According to the closed range theorem for De Rham complexes \cite{Arnold2006,bru1992hilbert}, the operator $d_k$ is bounded from below on the orthogonal complement of its effective kernel. Thus, there exists a generalized coercivity constant $C_{\rm coer} > 0$ (such as the Maxwell-Poincaré or Friedrichs constant), dependent only on the geometry of $D_i$, such that
\begin{equation}
    \sum_{i=1}^M \|d_k u^*\|_{L^2(D_i)}^2 \ge C_{\rm coer} \sum_{i=1}^M \|u^*\|_{L^2(D_i)}^2.
\end{equation}
Since the background energy is non-negative ($\|d_k u^*\|_{L^2(\Omega_0)}^2 \ge 0$), substituting this lower bound into the Rayleigh quotient yields
\begin{equation}
    R^\eta(u^*) \ge \frac{\eta C_{\rm coer} \sum_{i=1}^M \|u^*\|_{L^2(D_i)}^2}{\|u^*\|_{L^2(\Omega_0)}^2 + \eta \sum_{i=1}^M \|u^*\|_{L^2(D_i)}^2}.
\end{equation}
Dividing the numerator and the denominator by $\eta \sum_{i=1}^M \|u^*\|_{L^2(D_i)}^2$ and taking the asymptotic limit as $\eta \rightarrow \infty$, we obtain
\begin{equation}
    \lim_{\eta \rightarrow \infty} R^\eta(u^*) \ge \lim_{\eta \rightarrow \infty} \frac{C_{\rm coer}}{\frac{\|u^*\|_{L^2(\Omega_0)}^2}{\eta \sum_{i=1}^M \|u^*\|_{L^2(D_i)}^2} + 1} = C_{\rm coer}.
\end{equation}
Consequently, by the Max-Min principle, $\lambda_{N_{\rm topo}+1}^\eta \ge C_{\rm coer} > 0$, mathematically confirming that the emergence of the spectral gap is fundamentally governed by the topological invariants $\beta_k(D_i)$.
\end{proof}

\cref{tab:topology} summarizes this universal topological rule. It provides explicit guidelines
for designing robust multiscale spaces.

\begin{table}[ht]
\centering
\caption{The unified topological spectral gap governed by the De Rham complex and Betti numbers ($\beta_k$) for high-contrast multiscale operators.}
\label{tab:topology}
    \renewcommand{\arraystretch}{1.4}
\begin{tabularx}{\textwidth}{lXXX} 
\hline
\textbf{PDE system} & \textbf{Function space} & \textbf{Operator} ($d_k$) & \textbf{Null space Dim.} ($R$) \\
\hline
Scalar diffusion & 0-forms $H^1(\Omega)$ & Gradient ($\nabla$) & $R = \beta_0$ (connected components) \\
Maxwell's equations & 1-Forms $H(\text{curl}; \Omega)$ & Curl ($\nabla\times$) & $R = \beta_1$ (topological tunnels) \\
Grad-div operator & 2-Forms $H(\text{div}; \Omega)$ & Divergence ($\nabla\cdot$) & $R = \beta_2$ (enclosed cavities) \\
\hline
\end{tabularx}
\end{table}

\begin{remark}
    The topological spectral gap established in \cref{thm:homo_spectral_gap} has profound implications for the approximation theory of high-contrast PDEs, particularly concerning the Kolmogorov $n$-width \cite{pinkus2012n}. In functional analysis, the Kolmogorov $n$-width measures the optimal worst-case approximation error of a solution manifold using any $n$-dimensional linear subspace. Our results rigorously imply that in high-contrast media, the $n$-width decays extremely slowly for $n \le N_{\rm topo}$ due to the localized $\mathcal{O}(\eta^{-1})$ energy modes. However, the emergence of the spectral gap guarantees a sharp decay in the Kolmogorov $n$-width exactly at the dimension $n = N_{\rm topo} + 1$. Consequently, the topological invariant $N_{\rm topo} = \sum_{i=1}^M \beta_k(D_i)$ not only quantifies the macroscopic geometry but also defines the minimal intrinsic dimension required for multiscale model reduction methods.
\end{remark}

\section{Spectral decoupling and the multiscale Weyl's law}
\label{sec:spec_decoupling}
\Cref{sec:spectral_gap,sec:d_dimensions,sec:derham} addressed the low-frequency modes governed by the null space dimension $N$ (or $N_{\rm topo}$) and the formation of the topological spectral gap. We now turn our attention to the asymptotic behavior of the spectrum beyond the gap. Specifically, we investigate the limit of the eigenvalue sequence $\{\lambda_{n}^{\eta}\}$ for a fixed index $n > N$ as the contrast $\eta \rightarrow \infty$. We will rigorously prove that the multiscale spectrum bifurcates into two independent invariant components, completely restructuring the classical Weyl's law \cite{weyl1912asymptotische,safarov1997asymptotic}. 

While the unified variational and topological frameworks established in \cref{sec:spectral_gap,sec:d_dimensions,sec:derham} are applicable to a wide class of high-contrast PDEs (including vector-valued fields and higher-order operators), for the sake of clarity and simplicity in notation, we will restrict our explicit asymptotic proofs in \cref{sec:spec_decoupling,sec:homogenization} to the scalar diffusion problem. It is important to emphasize that this is done without loss of generality. The fundamental mechanisms underlying the spectral decoupling, the multiscale Weyl's law, and the spectral completeness can be naturally extended to other complex physical systems following the abstract coercivity and Hodge decomposition theories discussed earlier.

\subsection{Variational setup}
We consider the weak formulation of the generalized spectral problem \cref{eq:general_spectral} with the contrast parameter $\eta$. To facilitate a rigorous asymptotic analysis, we impose a global $\eta$-weighted $L^2$-normalization on the eigenfunctions $u^\eta \in H_0^1(\Omega)$:
\begin{equation}\label{eq:normalization}
\|u^\eta\|_{L^2(\Omega_0)}^2 + \eta \sum_{i=1}^{M} \|u^\eta\|_{L^2(D_i)}^2 = 1.
\end{equation}

By the Courant-Fischer Min-Max principle, for any fixed index $n$, the eigenvalue $\lambda_n^\eta$ is uniformly bounded from above by choosing a $n$-dimensional test space $S_n \subset H_0^1(\Omega_0)$ extended by zero into $D$. For such test functions, the high-contrast penalty vanishes, yielding $\lambda_n^\eta \le C_n < \infty$, where $C_n$ is independent of $\eta$. 

Substituting $v = u^\eta$ into the variational equation and utilizing the normalization condition \eqref{eq:normalization}, the Rayleigh quotient immediately provides the following a priori gradient energy bounds:
\begin{equation}\label{eq:apriori_grad}
\|\nabla u^\eta\|_{L^2(\Omega_0)}^2 \le C_n, \quad \text{and} \quad \sum_{i=1}^{M} \|\nabla u^\eta\|_{L^2(D_i)}^2 \le \frac{C_n}{\eta}.
\end{equation}
Furthermore, normalization \eqref{eq:normalization} implies that the mass inside the inclusions decays as $\|u^\eta\|_{L^2(D)}^2 \le \eta^{-1}$. 

\subsection{The mass concentration dichotomy} The limit spectrum is entirely governed by the asymptotic distribution of the generalized mass. We define the background matrix mass concentration limit as:
\begin{equation}
\alpha = \lim_{\eta \to \infty} \|u^\eta\|_{L^2(\Omega_0)}^2 \in [0, 1].
\end{equation}
Depending on whether $\alpha > 0$ or $\alpha = 0$, the asymptotic behavior of the eigenfunctions exhibits a dichotomy, as established in the following theorem.

\begin{theorem}[Spectral decoupling] 
\label{theorem:spec_decou}
Let $\{\lambda_n^\eta\}_{n=1}^\infty$ be the eigenvalue sequence for the multiscale operator. For any fixed index $n > N$, there exists a subsequence such that $\lambda^\eta_n \to \lambda^* \in (0, \infty)$ as $\eta \to \infty$. Then, the limit $\lambda^*$ must belong to the union of two independent discrete spectra:
\begin{enumerate}
    \item The Dirichlet spectrum of the background matrix: $\sigma(\mathcal{L}_{\Omega_0}^{\rm Dir})$.
    \item The internal Neumann spectrum of the inclusions: $\sigma(\mathcal{L}_{D}^{\rm Neu}) \setminus \{0\}$.
\end{enumerate}
\end{theorem}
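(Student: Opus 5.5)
We work with the scalar diffusion problem and take the normalization \eqref{eq:normalization} together with the a priori bounds \eqref{eq:apriori_grad}. Along the given subsequence, the plan is to pass to the limit separately in $\Omega_0$ and in each $D_i$, and then to read off which limit problem $\lambda^*$ solves according to the mass concentration dichotomy ($\alpha>0$ or $\alpha=0$). To expose the inclusion dynamics we introduce the rescaled inclusion field $w^\eta := \sqrt{\eta}\, u^\eta|_{D}$. Then \eqref{eq:normalization} gives $\|w^\eta\|_{L^2(D)}^2 \le 1$, and \eqref{eq:apriori_grad} gives $\sum_i\|\nabla w^\eta\|_{L^2(D_i)}^2 \le C_n$. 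Hence, after extracting a further subsequence, we obtain:
\begin{itemize}
\item $u^\eta|_{\Omega_0} \rightharpoonup u_0$ weakly in $H^1(\Omega_0)$, strongly in $L^2(\Omega_0)$;
\item $w^\eta \rightharpoonup w$ weakly in $H^1(D_i)$, strongly in $L^2(D_i)$, for each $i$.
\end{itemize}
Both strong limits use Rellich compactness on Lipschitz $D_i$ and $\Omega_0$. By the normalization, $\|u_0\|_{L^2(\Omega_0)}^2 = \alpha$ and $\|w\|_{L^2(D)}^2 = 1-\alpha$.

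\textbf{Case $\alpha>0$ (background limit).} On $\partial D_i$ the trace of $u^\eta$ is controlled by $\|u^\eta\|_{H^1(D_i)} \lesssim \eta^{-1/2}$, so it tends to $0$. Trace continuity from the $\Omega_0$ side then gives $u_0 = 0$ on $\partial D$, and also on $\partial\Omega$; hence $u_0 \in H_0^1(\Omega_0)$. For test functions $v \in H_0^1(\Omega_0)$ extended by zero into $D$, the variational equation reduces exactly to $\int_{\Omega_0} \nabla u^\eta\cdot\nabla v = \lambda_n^\eta \int_{\Omega_0} u^\eta v$ (taking $\kappa=\rho=1$ in $\Omega_0$ for simplicity). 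Passing to the limit yields the Dirichlet eigenproblem on $\Omega_0$. Since $u_0\neq 0$, this gives $\lambda^* \in \sigma(\mathcal{L}^{\rm Dir}_{\Omega_0})$.

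\textbf{Case $\alpha=0$ (inclusion limit).} Here $\|w\|_{L^2(D)}=1$, so $w\ne0$. Fix $\phi \in H^1(D_i)$ and test with $v^\eta = \sqrt{\eta}\,E\phi$, where $E\phi$ is an $H^1$ extension of $\phi$ into a thin neighbourhood in $\Omega_0$, cut off to have support away from other inclusions and $\partial\Omega$. The inclusion part of the equation reads $\eta\int_{D_i}\nabla u^\eta\cdot\nabla \sqrt\eta\phi = \eta\int_{D_i} \nabla w^\eta\cdot\nabla\phi$, matched by $\lambda_n^\eta\,\eta\int_{D_i}w^\eta\phi$. After dividing by $\eta$, the matrix contribution is
\begin{equation*}
\eta^{-1/2}\Bigl(\int_{\Omega_0}\nabla u^\eta\cdot\nabla E\phi - \lambda_n^\eta \int_{\Omega_0} u^\eta E\phi\Bigr) = \mathcal{O}(\eta^{-1/2}),
\end{equation*}
using \eqref{eq:apriori_grad} and the $L^2$ bound. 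In the limit, $\int_{D_i}\nabla w\cdot\nabla\phi = \lambda^*\int_{D_i} w\phi$ for all $\phi\in H^1(D_i)$, which is the Neumann problem on each $D_i$. Thus $\lambda^*$ is a Neumann eigenvalue of some $D_i$ on which $w\ne 0$.

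\textbf{Excluding $\lambda^*=0$ and the main obstacle.} Since $n>N$, Theorem~\ref{thm:unified_spectral_gap} gives $\lambda_n^\eta\ge\lambda_{N+1}^\eta\ge C_{\rm gap}>0$, so $\lambda^*>0$; finiteness follows from $\lambda_n^\eta\le C_n$. In particular, the limit is a nonzero Neumann eigenvalue, giving $\lambda^*\in\sigma(\mathcal{L}_D^{\rm Neu})\setminus\{0\}$.

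The main delicate step is the coupling term in the $\alpha=0$ case. The $\sqrt\eta$-scaled test function must not generate a divergent contribution in $\Omega_0$, which the $\eta^{-1/2}$ bound above handles. When $\alpha>0$ and $w\neq0$ simultaneously, both conclusions hold, and that is harmless for the union statement. Care is also needed that weak limits do not lose the nontriviality of $u_0$ or $w$. This is guaranteed by the strong $L^2$ convergence, which makes the mass split $\alpha$, $1-\alpha$ exact in the limit.
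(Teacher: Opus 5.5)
Your proposal is correct and follows essentially the same route as the paper. It uses the same $\eta$-weighted normalization and a priori bounds, the same $\alpha>0$ versus $\alpha=0$ dichotomy, the Dirichlet limit obtained by testing with zero-extended $v\in H_0^1(\Omega_0)$, and the Neumann limit obtained from the $\sqrt{\eta}$-rescaled inclusion field tested against an extension of $\phi\in H^1(D_i)$ whose matrix contribution is $\mathcal{O}(\eta^{-1/2})$. Your small deviations are sound, and the second makes explicit a point the paper leaves implicit: you use a trace argument for $u_0\in H_0^1(\Omega_0)$ instead of the paper's global $H_0^1(\Omega)$ compactness with $u^*|_D\equiv 0$, and you invoke \cref{thm:unified_spectral_gap} together with $\lambda_n^\eta\le C_n$ to secure $\lambda^*\in(0,\infty)$ and rule out the zero Neumann eigenvalue.
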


\begin{proof}
The proof proceeds by analyzing the two mutually exclusive cases of the mass concentration limit $\alpha$.

\textbf{Case 1: background matrix dominated mode ($\alpha > 0$).}
From the a priori estimates \eqref{eq:apriori_grad} and \eqref{eq:normalization}, the sequence $\{u^\eta\}$ is uniformly bounded in the Hilbert space $H_0^1(\Omega)$. 
By the Banach-Alaoglu theorem \cite{brezis2011functional}, we can extract a weakly convergent subsequence (still denoted by $u^\eta$). Furthermore, the Rellich-Kondrachov compact embedding theorem \cite{brezis2011functional} ensures that this sequence converges strongly in $L^2(\Omega)$. Thus, there exists a limit function $u^* \in H_0^1(\Omega)$ such that
\begin{equation}
    u^\eta \rightharpoonup u^* \quad \text{weakly in } H_0^1(\Omega), \quad \text{and} \quad u^\eta \to u^* \quad \text{strongly in } L^2(\Omega).
\end{equation}
Since the mass bound strictly yields $\|u^\eta\|_{L^2(D)}^2 \le \eta^{-1}$, the strong $L^2$-convergence implies $\|u^*\|_{L^2(D)} = 0$, meaning $u^*|_D \equiv 0$. Consequently, the trace theorem ensures $u^* \in H_0^1(\Omega_0)$. Furthermore, since $\|u^*\|_{L^2(\Omega_0)}^2 = \alpha > 0$, the limit $u^*$ is non-trivial.

For an arbitrary test function $v \in H_0^1(\Omega_0)$ extended by zero to $\Omega$, all integrals over $D$ vanish identically. Evaluating the variational equation \eqref{eq:general_spectral} on $v$ and passing to the limit $\eta \to \infty$ yields
\begin{equation}
\begin{aligned}
     \lim_{\eta \to \infty} \int_{\Omega_0} \nabla u^\eta \cdot \nabla v \, dx & = \lim_{\eta \to \infty} \lambda^\eta \int_{\Omega_0} u^\eta v \, dx \\
     & \implies \int_{\Omega_0} \nabla u^* \cdot \nabla v \, dx = \lambda^* \int_{\Omega_0} u^* v \, dx, \quad \forall v \in H_0^1(\Omega_0).
\end{aligned}
\end{equation}
This rigorously confirms that $\lambda^*$ belongs to the Dirichlet spectrum of the background matrix, i.e., $\lambda^* \in \sigma(\mathcal{L}_{\Omega_0}^{\rm Dir})$.

\textbf{Case 2: Internal resonance mode ($\alpha = 0$).}
In this case, the mass strictly concentrates within the inclusions: $\lim_{\eta \to \infty} \eta \|u^\eta\|_{L^2(D)}^2 = 1$. We introduce the rescaled sequence $\tilde{u}^\eta = \sqrt{\eta} u^\eta$, preserving the local mass limit $\|\tilde{u}^\eta\|_{L^2(D)}^2 \to 1$. The global variational equation is correspondingly rescaled as:
\begin{equation} \label{eq:rescaled_var}
\frac{1}{\eta} \int_{\Omega_0} \nabla \tilde{u}^\eta \cdot \nabla v \, dx + \int_D \nabla \tilde{u}^\eta \cdot \nabla v \, dx = \lambda^\eta \left( \frac{1}{\eta} \int_{\Omega_0} \tilde{u}^\eta v \, dx + \int_D \tilde{u}^\eta v \, dx \right), \quad \forall v \in H_0^1(\Omega).
\end{equation}
The rescaled gradient bound $\|\nabla \tilde{u}^\eta\|_{L^2(D)}^2 \le C_n$ ensures the existence of a local weak limit $\tilde{u}^\eta|_D \rightharpoonup \tilde{u}^*$ in $H^1(D)$. By compact embedding, $\tilde{u}^\eta \to \tilde{u}^*$ strongly in $L^2(D)$, which strictly preserves the local mass limit $\|\tilde{u}^*\|_{L^2(D)} = 1$ ($\tilde{u}^* \ne 0$).

To decouple the domains, for any unconstrained local test function $v \in H^1(D)$, we employ the Calder\'{o}n extension theorem \cite{evans2022partial} to construct a bounded global extension $E(v) \in H_0^1(\Omega)$ such that $E(v)|_D = v$ and $\|E(v)\|_{H^1(\Omega)} \le K \|v\|_{H^1(D)}$. Substituting $v = E(v)$ into \eqref{eq:rescaled_var}, the background gradient is bounded via the Cauchy-Schwarz inequality:
\begin{equation}
\begin{aligned}
    \left| \frac{1}{\eta} \int_{\Omega_0} \nabla \tilde{u}^\eta \cdot \nabla E(v) \, dx \right| & \le \frac{1}{\eta} \|\nabla \tilde{u}^\eta\|_{L^2(\Omega_0)} \|\nabla E(v)\|_{L^2(\Omega_0)} \\
    & \le \frac{\sqrt{C_n \eta}}{\eta} K \|v\|_{H^1(D)} = \mathcal{O}(\eta^{-1/2}).
\end{aligned}
\end{equation}
Similarly, the background mass term satisfies $\left| \frac{1}{\eta} \int_{\Omega_0} \tilde{u}^\eta E(v) \, dx \right| \le \mathcal{O}(\eta^{-1/2})$. Taking the asymptotic limit $\eta \to \infty$, the background contributions vanish, and the equation rigorously reduces to the local domain:
$$\int_D \nabla \tilde{u}^* \cdot \nabla v \, dx = \lambda^* \int_D \tilde{u}^* v \, dx, \quad \forall v \in H^1(D).$$
Applying integration by parts on the unconstrained space $H^1(D)$ yields the strong form:
$$\int_D (-\Delta \tilde{u}^* - \lambda^* \tilde{u}^*) v \, dx + \int_{\partial D} \frac{\partial \tilde{u}^*}{\partial n} v \, ds = 0, \quad \forall v \in H^1(D).$$
This naturally yields the homogeneous Neumann boundary condition $\frac{\partial \tilde{u}^*}{\partial n} = 0$ on $\partial D$. Thus, $\lambda^*$ is verified as an internal Neumann eigenvalue, completing the proof.
\end{proof}

\begin{remark}[Physical interpretation of the mass concentration limit $\alpha$]
In fact, the proof of case 2 is true for all cases where $\alpha < 1$.
The parameter $\alpha = \lim_{\eta\rightarrow\infty}||u^{\eta}||_{L^2(\Omega_0)}^2 $ serves as a mathematical metric for the asymptotic state of the mass distribution. The tripartite classification of $\alpha$ clarifies how the geometric and material features of the high-contrast medium govern the macro-micro mode decoupling:
\begin{itemize}
    \item \textbf{Macroscopic wave propagation ($\alpha = 1$):} when $\alpha = 1$, the generalized mass strictly concentrates within the continuous background matrix $\Omega_0$, while the mass inside the heavy inclusions vanishes ($||u^{\eta}||_{L^2(D)} \sim \mathcal{O}(\eta^{-1})$). In this case, the limiting eigenmodes behave as macroscopic waves that freely propagate through the background medium, governed by the Dirichlet spectrum of $\Omega_0$.
    \item \textbf{Localized microscopic resonance ($\alpha = 0$):} When $\alpha = 0$, the mass is trapped within the stiff inclusions $D$, forcing the background displacement field to scale as $||u^{\eta}||_{L^2(\Omega_0)} \sim \mathcal{O}(\eta^{-1/2})$. This corresponds to a localized resonance mode, where a microscopic Neumann eigenfunction inside $D$ acts as a boundary source that drives a decaying Helmholtz response in the background matrix $\Omega_0$. 
    \item \textbf{Accidental resonance degeneracy ($0 < \alpha < 1$):} The intermediate case $0 < \alpha < 1$ represents a rare physical regime where the generalized mass is distributed across both the macroscopic background matrix and the microscopic inclusions. From a mathematical perspective, this indicates an intersection or ``accidental degeneracy" between the two decoupled spectra, i.e., $\sigma(\mathcal{L}_{\Omega_0}^{\rm Dir}) \cap \sigma(\mathcal{L}_{D}^{\rm Neu}) \neq \emptyset$.
\end{itemize}
\end{remark}

\subsection{The multiscale Weyl's law}
In standard homogeneous media, the asymptotic distribution of eigenvalues is characterized by the classical Weyl's law \cite{weyl1912asymptotische,safarov1997asymptotic}. The eigenvalue counting function $N(\lambda) = \#\{n : \lambda_n \le \lambda\}$ scales continuously with the total volume $|\Omega|$:
\begin{equation}
N(\lambda) \sim \frac{|B_d| |\Omega|}{(2\pi)^d} \lambda^{d/2} \quad \text{as } \lambda \rightarrow \infty,
\end{equation}
where $|B_d|$ denotes the volume of the unit ball in $\mathbb{R}^d$.

\cref{theorem:spec_decou} restructures the spectrum distribution for multiscale high-contrast PDEs. Because the spectrum rigorously decouples, the total counting function $N_{total}(\lambda)$ for the eigenvalues behaves asymptotically as the linear superposition of two geometrically distinct physical mechanisms:
\begin{equation}\label{eq:generalized_weyl}
N_{\rm total}(\lambda) \approx N_{\Omega_0}^{\text{Dir}}(\lambda) + \sum_{i=1}^{M} N_{D_i}^{\text{Neu}}(\lambda).
\end{equation}
Equation \eqref{eq:generalized_weyl} represents the multiscale Weyl's law for multiscale high-contrast media. It elegantly reveals that high-frequency modes in such systems do not propagate as unified waves. Instead, they decompose into either macroscopic wave propagation through the complex background matrix ($\Omega_0$), or isolated local resonances trapped within the inclusions ($D_i$).

\section{Spectral gap and homogenization theory}
\label{sec:homogenization}
In the previous sections, we used algebraic topology methods to demonstrate the existence of $N$ low-frequency eigenmodes  corresponding to the local null space within high-contrast structures. 
In this section, we will turn our attention to periodic high-contrast PDEs.  We utilize two-scale convergence and spectral homogenization theory to rigorously prove that, in the limit of vanishing periodicity ($\varepsilon \to 0$), the eigenspace of these $N$ topological modes converges precisely and completely to the spectral space of the macroscopic effective operator.

\subsection{Spectral homogenization theory}

Consider a periodic high-contrast eigenvalue problem within a physical domain $\Omega \subset \mathbb{R}^d$. Let $\varepsilon$ denote the periodicity scale and $\eta$ denote the high-contrast parameter. The spectral problem for the weighted elliptic operator $\mathcal{A}^\varepsilon = -(\kappa^\varepsilon)^{-1}\nabla \cdot (\kappa^\varepsilon \nabla)$ can be formulated in the following variational form:
find $(\lambda^\varepsilon, u^\varepsilon) \in \mathbb{R}^+ \times H^1_0(\Omega)$ such that
\begin{equation} \label{eq:micro_var}
    \int_\Omega \kappa^\varepsilon(x) \nabla u^\varepsilon \cdot \nabla v \, dx = \lambda^\varepsilon \int_\Omega \rho^\varepsilon(x) u^\varepsilon v \, dx, \quad \forall v \in H^1_0(\Omega),
\end{equation}
where the coefficients $\kappa^\varepsilon(x) = \kappa(x/\varepsilon), \rho^\varepsilon(x) = \rho(x/\varepsilon)$ satisfy $\kappa, \rho \sim \mathcal{O}(1)$ in the background medium $\Omega_0$, and $\kappa,\rho \sim \mathcal{O}(\eta)$ within the inclusions $D$.

To extract the macroscopic asymptotic behavior of the system as $\epsilon \to 0$, we introduce the classical corrector functions $\chi_j(y)$ on the reference periodic unit cell $Y = [0,1]^d$, which satisfy the following cell problem:
\begin{equation} \label{eq:cell_problem}
    -\nabla_y \cdot \Big( \kappa(y) (\nabla_y \chi_j + \mathbf{e}_j) \Big) = 0, \quad \text{in } Y,
\end{equation}
where $\chi_j$ is $Y$-periodic and $\int_Y \chi_j dy = 0$.

Based on the solutions to the cell problem, we define the macroscopic effective tensor $\kappa^*$ and the effective density $\rho^*$ as follows:
\begin{equation}
\begin{aligned}
    \kappa_{ij}^* &= \frac{1}{|Y|} \int_Y \kappa(y) \left( \delta_{ij} + \frac{\partial \chi_i}{\partial y_j} \right) dy, \\
    \rho^* &= \frac{1}{|Y|} \int_Y \kappa(y) dy.
\end{aligned}
\end{equation}
Consequently, the homogenized eigenvalue problem defined on the macroscopic space $H^1_0(\Omega)$ is formulated as: find $(\lambda^0, U) \in \mathbb{R}^+ \times H^1_0(\Omega)$ such that
\begin{equation} \label{eq:macro_var}
    \int_\Omega \kappa^* \nabla U \cdot \nabla V \, dx = \lambda^0 \int_\Omega \rho^* U V \, dx, \quad \forall V \in H^1_0(\Omega)
\end{equation}
Then we have following spectral convergence lemma \cite{kesavan1979homogenization}. This lemma states the behavior of the spectrum in homogenization. 
\begin{lemma}  
\label{lemma:homo_spec_conver}
Let $\{(\lambda_n^\varepsilon, u_n^\varepsilon)\}_{n=1}^\infty \subset \mathbb{R}^+ \times H_0^1(\Omega)$ be the normalized eigenpairs of the multiscale operator $\mathcal{A}^\varepsilon$. As $\varepsilon \to 0$, the multiscale spectrum converges to the spectrum of the macroscopic homogenized operator $\mathcal{A}^0$. Specifically, for each $n \in \mathbb{N}$, the entire sequence of eigenvalues converges:
\begin{equation}
    \lim_{\varepsilon \to 0} \lambda_n^\varepsilon = \lambda_n^0.
\end{equation}
Moreover, there exists a subsequence (still denoted by $\varepsilon$) such that the corresponding eigenfunctions satisfy
\begin{equation}
    u_n^\varepsilon \rightharpoonup u_n^0 \quad \text{weakly in } H_0^1(\Omega), \quad \text{and} \quad u_n^\varepsilon \to u_n^0 \quad \text{strongly in } L^2(\Omega),
\end{equation}
where $\{(\lambda_n^0, u_n^0)\}_{n=1}^\infty$ are the eigenpairs of the homogenized spectral problem \cref{eq:macro_var}. The sequence of limits $\{u_n^0\}_{n=1}^\infty$ constitutes a complete orthonormal basis for $L^2(\Omega)$.

Furthermore, if $\lambda_n^0$ is a simple eigenvalue with algebraic multiplicity $m(\lambda_n^0) = 1$, then there exists a critical scale $\varepsilon_0(n) > 0$ such that for all $\varepsilon \le \varepsilon_0(n)$, the multiscale eigenvalue is also simple, i.e., $m(\lambda_n^\varepsilon) = 1$. In this case, for the normalized eigenfunction $u_n^0$, the entire sequence of $\{u_n^\varepsilon\}$ strictly converges without extracting subsequences:
\begin{equation}
    \lim_{\epsilon \to 0} \|u_n^\epsilon - u_n^0\|_{L^2(\Omega)} = 0.
\end{equation}
\end{lemma}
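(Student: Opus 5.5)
The plan is to treat \cref{lemma:homo_spec_conver} as an instance of Kesavan's abstract spectral homogenization argument, keeping the contrast $\eta$ fixed throughout. Then $\kappa^\varepsilon$ and $\rho^\varepsilon$ are uniformly bounded above and below, and the only limit taken is $\varepsilon\to0$.

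First I will establish uniform bounds. I will apply the Min-Max principle with an $\varepsilon$-independent test space, namely the span of the first $n$ homogenized eigenfunctions. Since $\kappa^\varepsilon$ and $\rho^\varepsilon$ are bounded between $\min(1,\eta)$ and $\max(1,\eta)$ up to constants, this gives $\lambda_n^\varepsilon\le C_n$ uniformly in $\varepsilon$. With the normalization $\int_\Omega\rho^\varepsilon|u_n^\varepsilon|^2=1$, it follows that $\|u_n^\varepsilon\|_{H_0^1}\le C$.

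Next I will extract limits by Rellich--Kondrachov. Along a subsequence, $u_n^\varepsilon\rightharpoonup u_n^0$ in $H_0^1$, $u_n^\varepsilon\to u_n^0$ in $L^2$, and $\lambda_n^\varepsilon\to\bar\lambda_n$.

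To pass to the limit in \eqref{eq:micro_var}, I will use the oscillating test functions $v^\varepsilon=V+\varepsilon\sum_j\chi_j(x/\varepsilon)\partial_jV$ built from the correctors of \eqref{eq:cell_problem}. Equivalently, I can invoke two-scale convergence of $\kappa^\varepsilon\nabla u^\varepsilon$ and use the $H$-convergence of $\kappa^\varepsilon$ to $\kappa^*$. On the mass side, $\rho^\varepsilon\rightharpoonup\langle\rho\rangle$ weakly-$*$, and strong $L^2$ convergence of $u_n^\varepsilon$ then gives $\int\rho^\varepsilon u_n^\varepsilon v\to\int\rho^* u_n^0 v$. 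Note that the effective density must be the cell average of $\rho$; the displayed formula with $\kappa$ is presumably a typo. This shows that $(\bar\lambda_n,u_n^0)$ is a homogenized eigenpair. The normalization and the $\rho^\varepsilon$-orthogonality pass to the limit by the same weak-times-strong product argument, so the limits $u_1^0,\dots,u_n^0$ are orthonormal in the weighted inner product and hence nonzero.

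The main obstacle is identifying $\bar\lambda_n=\lambda_n^0$ with the correct index, with no eigenvalues skipped. For the upper bound $\limsup\lambda_n^\varepsilon\le\lambda_n^0$, I will use the Min-Max principle with recovery sequences $U_k+\varepsilon\chi\cdot\nabla U_k$ for the first $n$ homogenized eigenfunctions, whose energies converge to the homogenized energies. For the lower bound, the limits $u_1^0,\dots,u_n^0$ are orthonormal eigenfunctions with eigenvalues $\bar\lambda_1\le\dots\le\bar\lambda_n$, and counting these forces $\bar\lambda_n\ge\lambda_n^0$. Because the limit is unique, the whole sequence $\lambda_n^\varepsilon$ converges. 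Completeness of $\{u_n^0\}$ in $L^2$ then follows from the spectral theorem for the compact self-adjoint inverse of $\mathcal A^0$.

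For a simple $\lambda_n^0$, convergence of $\lambda_{n\pm1}^\varepsilon$ to neighbours strictly separated from $\lambda_n^0$ forces $m(\lambda_n^\varepsilon)=1$ for all $\varepsilon\le\varepsilon_0(n)$. After fixing the sign of $u_n^\varepsilon$ by requiring $(u_n^\varepsilon,u_n^0)\ge0$, every subsequence has the same limit $u_n^0$, so the whole sequence converges strongly in $L^2$.
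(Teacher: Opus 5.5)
Your proposal is correct, and it is essentially the standard Kesavan argument that the paper invokes by citation rather than proving: uniform min--max bounds at fixed $\eta$, Rellich compactness, identification of the limit via correctors or H-convergence together with $\rho^\varepsilon \rightharpoonup \rho^*$ weakly-$*$, index matching by a min--max upper bound plus a counting lower bound, and the subsequence argument with a sign convention in the simple case. Two details need tightening: $U_k+\varepsilon\chi(x/\varepsilon)\cdot\nabla U_k$ is not in $H_0^1(\Omega)$ in general, so for the upper bound use either $C_c^\infty$ approximants of $U_k$ or the functions $w_k^\varepsilon\in H_0^1(\Omega)$ solving $-\nabla\cdot(\kappa^\varepsilon\nabla w_k^\varepsilon)=\lambda_k^0\rho^*U_k$ (whose energy and mass matrices converge to those of $U_1,\dots,U_n$), and you need a diagonal subsequence serving every $n$ before concluding that $\{u_n^0\}$ is complete.
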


\subsection{Asymptotic spectral gap dependent on periodicity}
To connect microscopic topological modes with the macroscopic homogenized spectrum, we analyze the system under a fixed high contrast $\eta \gg 1$ while taking the limit of periodicity $\varepsilon \rightarrow 0$. Due to the $\varepsilon^{-2}$ scaling of the gradient operator on domains of size $\varepsilon$, we show that the spectrum exhibits a robust band separation. The relative spectral gap ratio remains invariant with respect to $\varepsilon$. This ensures that the topological separation in the spectrum is a fundamental property of the multiscale operator across all spatial scales.

\begin{theorem}
\label{thm:asy_spec_gap}
Let $\varepsilon \ll 1$ be the periodicity scale and $\eta \gg 1$ be the contrast parameter. Let $\lambda_{n}^{\epsilon}$ be the $n$-th eigenvalue of the multiscale system \eqref{eq:micro_var}. The spectrum exhibits a robust band separation characterized by the following $\varepsilon$- and $\eta$-dependent bounds:
\begin{itemize}
    \item Topological low-frequency band: For $n \le N(\varepsilon)$, $\lambda_{n}^{\varepsilon} \le \mathcal{O}(\varepsilon^{-2}\eta^{-1})$.
    \item High-frequency resonance band: For $n \ge N(\varepsilon)+1$, $\lambda_{n}^{\varepsilon} \ge \mathcal{O}(\varepsilon^{-2})$.
\end{itemize}
\end{theorem}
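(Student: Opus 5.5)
The plan is to repeat the Min-Max/Max-Min argument of \cref{thm:unified_spectral_gap}, now at the microscopic scale, and to follow how the Poincar\'e constant scales with $\varepsilon$. Here $N(\varepsilon)$ is the number of inclusions $D_i^\varepsilon = \varepsilon(\mathbf{k} + D_Y)$ contained in $\Omega$, where $D_Y \subset Y$ is the reference inclusion. With $R=1$ we have $N(\varepsilon) \sim |\Omega|\varepsilon^{-d}$. The assumptions are that the inclusions are well separated from $\partial Y$ and that $D_Y$ is Lipschitz. These ensure that the extension operators and Poincar\'e constants on the unit cell are fixed, and that every rescaled estimate picks up only explicit powers of $\varepsilon$.

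\textbf{Upper bound for $n \le N(\varepsilon)$.} For each inclusion I would not use the zero extension, which is not in $H_0^1(\Omega)$. Instead I take a cutoff $\phi_i^\varepsilon(x) = \phi((x - \varepsilon\mathbf{k}_i)/\varepsilon)$. Here $\phi \in C_c^\infty(Y)$ with $\phi \equiv 1$ on $D_Y$. These functions have disjoint supports, so they span an $N(\varepsilon)$-dimensional space $S$ that is orthogonal in both forms. On each $\phi_i^\varepsilon$ the inclusion energy vanishes because $\nabla\phi_i^\varepsilon = 0$ on $D_i^\varepsilon$. A change of variables gives background energy $\int_{\Omega_0}|\nabla \phi_i^\varepsilon|^2 = \varepsilon^{d-2}\|\nabla_y\phi\|^2_{L^2(Y)}$ and mass at least $\eta\,\varepsilon^d |D_Y|$. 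Because of the disjointness, the maximum of the Rayleigh quotient over $S$ equals the maximum over the individual $\phi_i^\varepsilon$. That maximum is at most $C\varepsilon^{-2}\eta^{-1}$, and the Min-Max principle then gives the first bullet.

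\textbf{Lower bound for $n = N(\varepsilon)+1$.} I apply the Max-Min principle with $V = \operatorname{span}\{\mathbf 1_{D_i^\varepsilon}\}$, where orthogonality is taken in the $\rho^\varepsilon$-weighted inner product; this amounts to requiring zero mean on each inclusion. For $u$ in the complement, the inclusion part is controlled by the Poincar\'e--Wirtinger inequality of \cref{sec:d_dimensions_diffu}, rescaled from $D_Y$ to $D_i^\varepsilon$: $\|\nabla u\|^2_{L^2(D_i^\varepsilon)} \ge C_P\varepsilon^{-2}\|u\|^2_{L^2(D_i^\varepsilon)}$. The bound used in \cref{thm:unified_spectral_gap} is not enough here, because it ignores the background mass and only gives a limit as $\eta\to\infty$. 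I also need $\|\nabla u\|^2_{L^2(\Omega_0)} \gtrsim \varepsilon^{-2}\|u\|^2_{L^2(\Omega_0)}$ up to the inclusion contributions. To get this, I would work cell by cell on $Y_i^\varepsilon = \varepsilon(\mathbf{k}_i + Y)$. On each cell, $u - \bar u_{D_i^\varepsilon}$ is controlled in $L^2(Y_i^\varepsilon)$ by $\varepsilon\|\nabla u\|_{L^2(Y_i^\varepsilon)}$, which follows from a Poincar\'e inequality on the connected cell with mean taken over the subset $D_Y$. Since $\bar u_{D_i^\varepsilon} = 0$, this yields $\|u\|^2_{L^2(Y_i^\varepsilon)} \le C\varepsilon^2\|\nabla u\|^2_{L^2(Y_i^\varepsilon)}$. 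Summing over cells and weighting by $\kappa,\rho$ (at most a factor $\eta$ on $D$, with matching stiffness $\eta$), the Rayleigh quotient is bounded below by $c\,\varepsilon^{-2}$ with $c$ independent of $\eta$ and $\varepsilon$. The weights need care here: on $D$ I use the inclusion Poincar\'e inequality with weight $\eta$ on both sides, and on $\Omega_0$ the cell Poincar\'e inequality with weight $1$ on both sides. Monotonicity of eigenvalues then gives the bound for all $n \ge N(\varepsilon)+1$.

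\textbf{Expected obstacle.} The main difficulty is the boundary layer of cells that meet $\partial\Omega$ and do not contain a full inclusion. On those cells I would use the Dirichlet condition directly, through a Friedrichs inequality on a strip of width $O(\varepsilon)$, which again gives a constant of order $\varepsilon^{-2}$. A second point to check is that the cell Poincar\'e constant does not degenerate in the weighted setting. I would settle this by applying the unweighted inequalities separately on $D$ and $\Omega_0$ as described, so that $\eta$ cancels.
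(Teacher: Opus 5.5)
Your proposal is correct and takes essentially the paper's route. Disjointly supported cutoffs equal to one on each inclusion give the min--max upper bound, and the lower bound comes from max--min under the inclusion-mean constraints, combining the $\varepsilon^{-2}$-scaled Poincar\'e inequality on the inclusions with a global patch inequality $E_0+E_1\ge C\varepsilon^{-2}(M_0+M_1)$; your cell-by-cell Poincar\'e inequality with mean over $D_Y$ actually proves this patch inequality, which the paper only asserts, and your weight bookkeeping is equivalent to its splitting $E_0+\eta E_1=(E_0+E_1)+(\eta-1)E_1$.

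The step that does not work as written is your boundary-layer fix. A Friedrichs inequality on an $O(\varepsilon)$ strip controls only unweighted mass, so it cannot bound $\eta\|u\|_{L^2}^2$ on inclusions cut by $\partial\Omega$. The bound can genuinely fail there: an inclusion clipped by $\partial\Omega$ in a tiny cap supports an $H_0^1$ function whose Rayleigh quotient is much smaller than $\varepsilon^{-2}$ when $\eta$ is large. You should instead assume that $\Omega$ is a union of whole $\varepsilon$-cells, or that cells meeting $\partial\Omega$ contain no inclusions; this is what the paper tacitly does, since its experiments use $\Omega=(0,1)^2$ with $\varepsilon=1/m$. Either assumption also guarantees that your cutoffs lie in $H_0^1(\Omega)$.
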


\begin{proof}
The proof relies on the Courant-Fischer Min-Max principle:
\begin{equation}
\lambda_{k}^{\varepsilon} = \min_{\dim(S_{k})=k} \max_{u \in S_{k}, u \ne 0} R^{\varepsilon}(u), \quad \text{where } R^{\varepsilon}(u) = \frac{\int_{\Omega_{0}^{\varepsilon}} |\nabla u|^{2} dx + \eta \int_{D^{\varepsilon}} |\nabla u|^{2} dx}{\int_{\Omega_{0}^{\varepsilon}} |u|^{2} dx + \eta \int_{D^{\varepsilon}} |u|^{2} dx}.
\end{equation}

\textbf{Step 1: upper bound for $n \le N(\varepsilon)$.} 
We construct a $N(\varepsilon)$-dimensional test space $S_{N} \subset H_{0}^{1}(\Omega)$. For each of the $M(\varepsilon)$ high-contrast inclusions $D_{i}^{\varepsilon}$ (which have size $\mathcal{O}(\varepsilon)$), we construct a test function $\phi_{i}^{\varepsilon}$ that is exactly the local null mode (e.g., $\phi_{i}^{\varepsilon} = 1$ inside $D_{i}^{\varepsilon}$) and decays smoothly to zero over a distance of $\mathcal{O}(\varepsilon)$ within the background matrix $\Omega_{0}^{\varepsilon}$.

Since $\phi_{i}^{\varepsilon}$ is a strict null mode inside the inclusion, the high-contrast energy penalty vanishes: $\int_{D_{i}^{\varepsilon}} |\nabla \phi_{i}^{\varepsilon}|^{2} dx = 0$. In the background domain, the gradient scales as $|\nabla \phi_{i}^{\varepsilon}| \sim \mathcal{O}(\varepsilon^{-1})$, and the volume of the support is $\mathcal{O}(\varepsilon^{d})$. Thus, the background energy is
\begin{equation}
    a_{0}(\phi_{i}^{\varepsilon}, \phi_{i}^{\varepsilon}) = \int_{\Omega_{0}^{\varepsilon}} |\nabla \phi_{i}^{\varepsilon}|^{2} dx \sim \varepsilon^{d}(\varepsilon^{-1})^{2} = \mathcal{O}(\varepsilon^{d-2}).
\end{equation}
Conversely, the mass is heavily weighted by the inclusion:
\begin{equation}
    m^{\varepsilon}(\phi_{i}^{\varepsilon}, \phi_{i}^{\varepsilon}) \ge \eta \int_{D_{i}^{\varepsilon}} |\phi_{i}^{\varepsilon}|^{2} dx \sim \eta \varepsilon^{d}.
\end{equation}
Evaluating the Rayleigh quotient for any $u \in S_{N}$:
\begin{equation}
    R^{\varepsilon}(u) \le \frac{\mathcal{O}(\varepsilon^{d-2})}{\eta \mathcal{O}(\varepsilon^{d})} = \mathcal{O}(\varepsilon^{-2}\eta^{-1}).
\end{equation}
By the Min-Max principle, minimizing over all $n$-dimensional subspaces yields $\lambda_{n}^{\varepsilon} \le \mathcal{O}(\varepsilon^{-2}\eta^{-1})$ for all $n \le N(\varepsilon)$.

\textbf{Step 2: lower bound for $n = N(\varepsilon)+1$.}
To establish a rigorous lower bound, we employ the Courant-Fischer Max-Min formulation:
\begin{equation}
    \lambda_{N+1}^{\varepsilon} = \max_{\dim(S)=N(\varepsilon)} \min_{u \perp S, u \ne 0} R^{\varepsilon}(u).
\end{equation}
We choose the test subspace $S$ to be the exact $N$-dimensional global topological null space. Thus, for any valid test function $u^{*} \perp S$, its $L^{2}$-projection onto the local kernel of every microscopic inclusion $D_{i}^{\varepsilon}$ is strictly zero. This orthogonality constraint leads to two distinct coercivity bounds: 

\textbf{Local coercivity:} Inside the inclusions, the scaled local Poincaré inequality holds:
\begin{equation}
    \int_{D^{\varepsilon}} |\nabla u^{*}|^{2} dx \ge \frac{C_{1}}{\varepsilon^{2}} \int_{D^{\varepsilon}} |u^{*}|^{2} dx.
\end{equation}

\textbf{Global patch coercivity:} Because $u^{*}$ is constrained by having zero mean on a dense periodic array of inclusions (separated by $\mathcal{O}(\varepsilon)$), the total gradient globally controls the total mass via the multiscale Poincaré-Wirtinger inequality:
\begin{equation}
    \int_{\Omega_{0}^{\varepsilon}} |\nabla u^{*}|^{2} dx + \int_{D^{\varepsilon}} |\nabla u^{*}|^{2} dx \ge \frac{C_{2}}{\varepsilon^{2}} \left( \int_{\Omega_{0}^{\varepsilon}} |u^{*}|^{2} dx + \int_{D^{\varepsilon}} |u^{*}|^{2} dx \right). 
\end{equation}
Let $E_{0}, E_{1}$ be the gradient energies in $\Omega_{0}^{\varepsilon}$ and $D^{\varepsilon}$, and $M_{0}, M_{1}$ be the corresponding $L^{2}$ masses. The numerator of the Rayleigh quotient can be algebraically rewritten and bounded by combining the two coercivity estimates:
\begin{equation}
    \text{Numerator} = E_{0} + \eta E_{1} = (E_{0} + E_{1}) + (\eta - 1)E_{1} \ge \frac{C_{2}}{\varepsilon^{2}}(M_{0} + M_{1}) + \frac{C_{1}(\eta - 1)}{\varepsilon^{2}}M_{1}.
\end{equation}
Let $C_{\min} = \min(C_{1}, C_{2}) > 0$. Factoring out $\frac{C_{\min}}{\varepsilon^{2}}$, we obtain
\begin{equation}
    \text{Numerator} \ge \frac{C_{\min}}{\varepsilon^{2}} [M_{0} + M_{1} + (\eta - 1)M_{1}] = \frac{C_{\min}}{\varepsilon^{2}} [M_{0} + \eta M_{1}].
\end{equation}
Interestingly, the bracketed term matches exactly the denominator of the multiscale Rayleigh quotient $R^{\varepsilon}(u^{*})$. Therefore, dividing by the denominator yields the following:
\begin{equation}
    R^{\varepsilon}(u^{*}) \ge \frac{C_{\min}}{\varepsilon^{2}} = \mathcal{O}(\varepsilon^{-2}).
\end{equation}
Since this bound strictly holds for any $u^{*} \perp S$, the Max-Min principle concludes that $\lambda_{N+1}^{\varepsilon} \ge \mathcal{O}(\varepsilon^{-2})$. This confirms that regardless of whether the eigenfunction's mass concentrates in the high-contrast inclusions or the background matrix, the energy universally scales as $\epsilon^{-2}$, preserving the  spectral gap.
\end{proof}

\subsection{Spectral completeness}
In this section, we demonstrate how the low-frequency topological modes, trapped within the spectral gap, asymptotically and fully converge to the entire spectrum of the homogenized operator as $\varepsilon \rightarrow 0$.

Under the geometric assumption of disconnected inclusions, the effective coefficients inherit distinct scalings from the microscopic contrast $\eta$. Since the mass density scales as $\mathcal{O}(\eta)$ in $D$ and $\mathcal{O}(1)$ in $\Omega_{0}$, the effective density is dominated by the inclusions:
\begin{equation}
    \rho^{*} = \frac{1}{|Y|} \int_{Y} \kappa(y) dy = \mathcal{O}(\eta).
\end{equation}
Conversely, because any macroscopic flux must traverse the background matrix $\Omega_{0}$ to pass between disconnected inclusions, the effective tensor $\kappa^{*}$ is restricted by the background matrix conductivity, yielding $\kappa^{*} = \mathcal{O}(1)$ \cite{zhikov2000extension}. This implies that the homogenized eigenvalues satisfy:
\begin{equation}
    \lambda_{n}^{0} \sim \frac{\kappa^{*}}{\rho^{*}} = \mathcal{O}(\eta^{-1}), \quad \forall n \in \mathbb{N}.
\end{equation}

\begin{theorem}
\label{thm:spectral_completeness}
As $\varepsilon \to 0$, the topological eigenspace with gradually expanding dimensions $S_{N(\varepsilon)}^\varepsilon = \text{span}\{u_1^\varepsilon, \dots, u_{N(\varepsilon)}^\varepsilon\}$ asymptotically exhausts the infinite-dimensional spectral space of the homogenized operator. Specifically, for any given macroscopic homogenized eigenpair $(\lambda_n^0, u_n^0)$, there exists a scale $\varepsilon_0(n) > 0$ such that for all $\varepsilon < \varepsilon_0(n)$, the corresponding multiscale eigenpair strictly belongs to the topological low-frequency band (i.e., $n \le N(\varepsilon)$) and converges:
\begin{equation}
    \lim_{\varepsilon \to 0} \lambda_n^\varepsilon = \lambda_n^0, \quad \text{and} \quad \lim_{\varepsilon \to 0} |u_n^\varepsilon - u_n^0|_{L^2(\Omega)} = 0.
\end{equation}
\end{theorem}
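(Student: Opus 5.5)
The proof combines two earlier results: the spectral convergence of \cref{lemma:homo_spec_conver} and the band separation of \cref{thm:asy_spec_gap}. The key observation is that $N(\varepsilon)=R\cdot M(\varepsilon)$ grows without bound as $\varepsilon\to 0$. Since the inclusions form a periodic array of cell size $\varepsilon$ in a fixed bounded domain $\Omega$, we have $M(\varepsilon)\sim |\Omega|\varepsilon^{-d}$, and hence $N(\varepsilon)\to\infty$. Consequently, for any fixed index $n$ there is a scale $\varepsilon_1(n)$ such that $n\le N(\varepsilon)$ for all $\varepsilon<\varepsilon_1(n)$. At that point the index $n$ lies, by pure counting, in the range covered by the topological low-frequency band of \cref{thm:asy_spec_gap}.

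To make this placement meaningful spectrally, and not just as an index statement, I will also check it at the level of eigenvalues. By \cref{lemma:homo_spec_conver}, $\lambda_n^\varepsilon\to\lambda_n^0$, and $\lambda_n^0$ is a fixed finite number. In particular, $\lambda_n^\varepsilon$ stays bounded as $\varepsilon\to0$. The high-frequency band, by contrast, satisfies $\lambda_{N(\varepsilon)+1}^\varepsilon\ge C_{\min}\varepsilon^{-2}\to\infty$. So for $\varepsilon$ small enough, $\lambda_n^\varepsilon<\lambda^\varepsilon_{N(\varepsilon)+1}$. Because eigenvalues are ordered, this again forces $n\le N(\varepsilon)$. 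I will take $\varepsilon_0(n)$ to be the minimum of $\varepsilon_1(n)$ and the threshold produced by this eigenvalue argument. This simultaneously gives the membership $u_n^\varepsilon\in S^\varepsilon_{N(\varepsilon)}$.

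The convergence statements then follow from \cref{lemma:homo_spec_conver}. The eigenvalue limit holds for the whole sequence. For the eigenfunctions I will use the lemma's subsequence convergence in $L^2$ in general, and whole-sequence convergence when $\lambda_n^0$ is simple. When $\lambda^0_n$ is multiple, the statement should be read as convergence of the spectral projections onto the eigenspace. I would prove that version by applying the lemma to every index in the cluster and using orthonormality together with the completeness of $\{u^0_n\}$. Exhaustion then follows from two facts: every $u^0_n$ is an $L^2$-limit of elements of $S^\varepsilon_{N(\varepsilon)}$, and $\{u_n^0\}$ is a complete basis. Together these give $\operatorname{dist}_{L^2}(v,S^\varepsilon_{N(\varepsilon)})\to0$ for every $v$ in the homogenized spectral space. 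This in turn follows by truncating the expansion of $v$ and using a triangle inequality.

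The main obstacle is uniformity in $\eta$. \cref{lemma:homo_spec_conver} is applied at fixed $\eta$, while the $\lambda^0_n$ are $\mathcal{O}(\eta^{-1})$. The limits are therefore compatible with the low band bound $\mathcal{O}(\varepsilon^{-2}\eta^{-1})$ only if $\eta$ is held fixed while $\varepsilon\to0$, and I will state that ordering explicitly. A second subtlety is that the constants $C_1$ and $C_2$ in \cref{thm:asy_spec_gap} must be independent of $\varepsilon$. This requires the scaled Poincaré inequality on the reference inclusion, transported by periodic rescaling, and I would invoke that directly.
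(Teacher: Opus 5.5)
Your proposal is correct and follows the same route as the paper: $N(\varepsilon)=R\,M(\varepsilon)\to\infty$ places any fixed index $n$ below the gap for small $\varepsilon$, and \cref{lemma:homo_spec_conver} at fixed $\eta$ supplies the convergence. Your eigenvalue-level check via \cref{thm:asy_spec_gap} is redundant, since $n\le N(\varepsilon)$ already follows from counting; your spectral-projection reading for multiple $\lambda_n^0$ and the truncation argument giving $\operatorname{dist}_{L^2}(v,S^\varepsilon_{N(\varepsilon)})\to 0$ are useful refinements, because the paper's proof asserts whole-sequence eigenfunction convergence without addressing multiplicity.
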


\begin{proof}
The dimension of the topological eigenspace is governed by the total number of local null modes. Since the number of microscopic inclusions $M(\varepsilon)$ scales inversely with the unit cell volume, the global null space dimension exhibits an algebraic blow-up:
\begin{equation}
    N(\epsilon) = R \cdot M(\epsilon) = \mathcal{O}(|\Omega|\epsilon^{-d}) \implies \lim_{\epsilon \to 0} N(\epsilon) = \infty.
\end{equation}

Fix an arbitrary macroscopic mode index $n \in \mathbb{N}$. Due to the strict divergence of $N(\varepsilon)$, there is a scale $\varepsilon_0(n) > 0$ such that for all $\varepsilon < \varepsilon_0(n)$, the inequality $n \le N(\varepsilon)$ is satisfied. This geometric constraint guarantees that the $n$-th multiscale eigenpair $(\lambda_n^\varepsilon, u_n^\varepsilon)$ is trapped within the topological low-frequency band, yielding
\begin{equation}
    u_n^\varepsilon \in S_{N(\varepsilon)}^\varepsilon, \quad \forall \varepsilon < \varepsilon_0(n).
\end{equation}

Applying the spectral convergence established in \cref{lemma:homo_spec_conver}, we immediately obtain
\begin{equation}
    \lambda_n^\varepsilon \to \lambda_n^0, \quad \text{and} \quad u_n^\varepsilon \rightharpoonup u_n^0 \text{ weakly in } H_0^1(\Omega) \text{ (and strongly in } L^2(\Omega)).
\end{equation}
Since the index $n \in \mathbb{N}$ is arbitrary, this confirms that for any target eigenpair of the macroscopic homogenized operator, the associated multiscale sequence eventually falls into the topological subspace $S_{N(\varepsilon)}^\varepsilon$ and converges exactly to it. Consequently, the expanding topological subspace asymptotically recovers the complete infinite-dimensional spectrum of $\mathcal{A}^0$ as $\varepsilon \to 0$, concluding the proof.
\end{proof}

In summary, the robust band separation and spectral completeness established in this section reveal the profound physical and mathematical significance of the topological spectral gap. The eigenspace below the spectral gap, corresponding to the topological low-frequency band, can be intrinsically understood as the carrier of macroscopic information. In the asymptotic limit of vanishing periodicity ($\varepsilon \to 0$), this expanding topological subspace continuously evolves to span the entire infinite-dimensional spectral space of the macroscopic homogenized operator, thereby fully preserving the macroscale effective physics. In contrast, the high-frequency eigenmodes beyond the spectral gap capture highly oscillatory microscopic information and localized resonances. As $\varepsilon \to 0$, the diverging energy cost forces these microscopic oscillations to asymptotically vanish from the homogenized macroscopic description. Consequently, the spectral gap serves as a fundamental mathematical demarcation between macroscopic wave behaviors and microscopic details, providing a solid theoretical justification for multiscale model reduction and coarse-space design.

\section{Numerical experiments}
\label{sec:num_ex}
In this section, we present a series of numerical experiments designed to validate the theoretical and topological frameworks established in the preceding sections. Our primary objectives are to demonstrate the existence of the topological spectral gap and to illustrate its universal dependence on the physical properties of the operator, the local kernel dimensions, and the topological invariants of the high-contrast inclusions. Furthermore, we numerically verify the asymptotic spectral decoupling theory and the multiscale Weyl's law under extreme high-contrast limits. Finally, we confirm the robust asymptotic band separation and spectral completeness of the low-frequency topological modes in the limit of vanishing periodicity.

\subsection{Numerical verification in 2D}
Our first set of numerical experiments focuses on the two-dimensional case. We consider a square domain $\Omega = (0, 1)^2$ containing $M = 4$ disconnected high-contrast inclusions $D_i$. The contrast parameter is a sufficiently large value $\eta = 10^5$ to clearly separate the low-frequency modes from the rest of the spectrum.

According to our unified variational framework, the number of eigenvalues $\lambda_n$ that scale as $\mathcal{O}(\eta^{-1})$ (i.e. those below the spectral gap) is given by $N = R \times M$, where $R$ is the dimension of the kernel of the local differential operator. We verify this ``spectral gap" by examining three distinct physical problems: the scalar diffusion equation, the linear elasticity equation, and the fourth-order biharmonic equation.

\begin{figure}[htbp]
    \centering
    \begin{subfigure}[b]{0.32\textwidth}
        \centering
        \includegraphics[width=\textwidth]{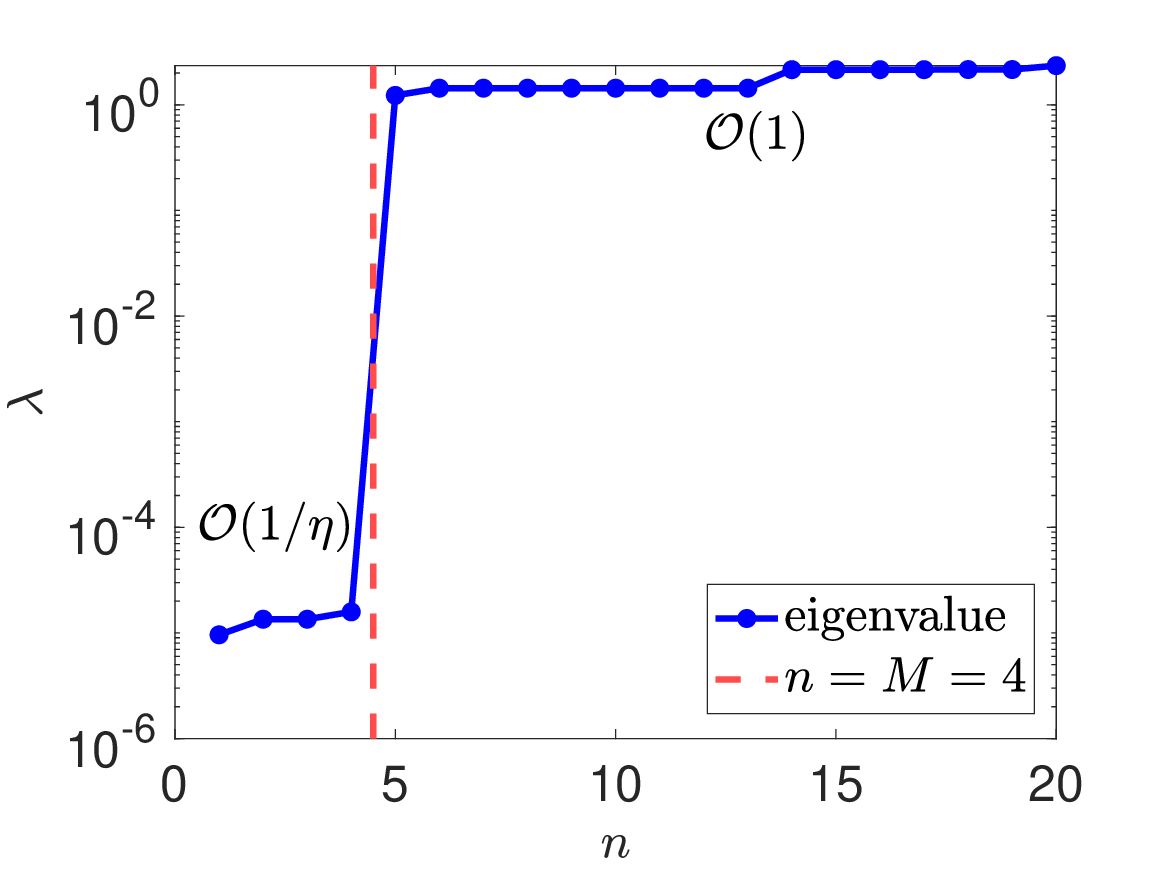}
        \caption{Scalar diffusion ($R=1$)}
        \label{fig:2d_diff}
    \end{subfigure}
    \hfill
    \begin{subfigure}[b]{0.32\textwidth}
        \centering
        \includegraphics[width=\textwidth]{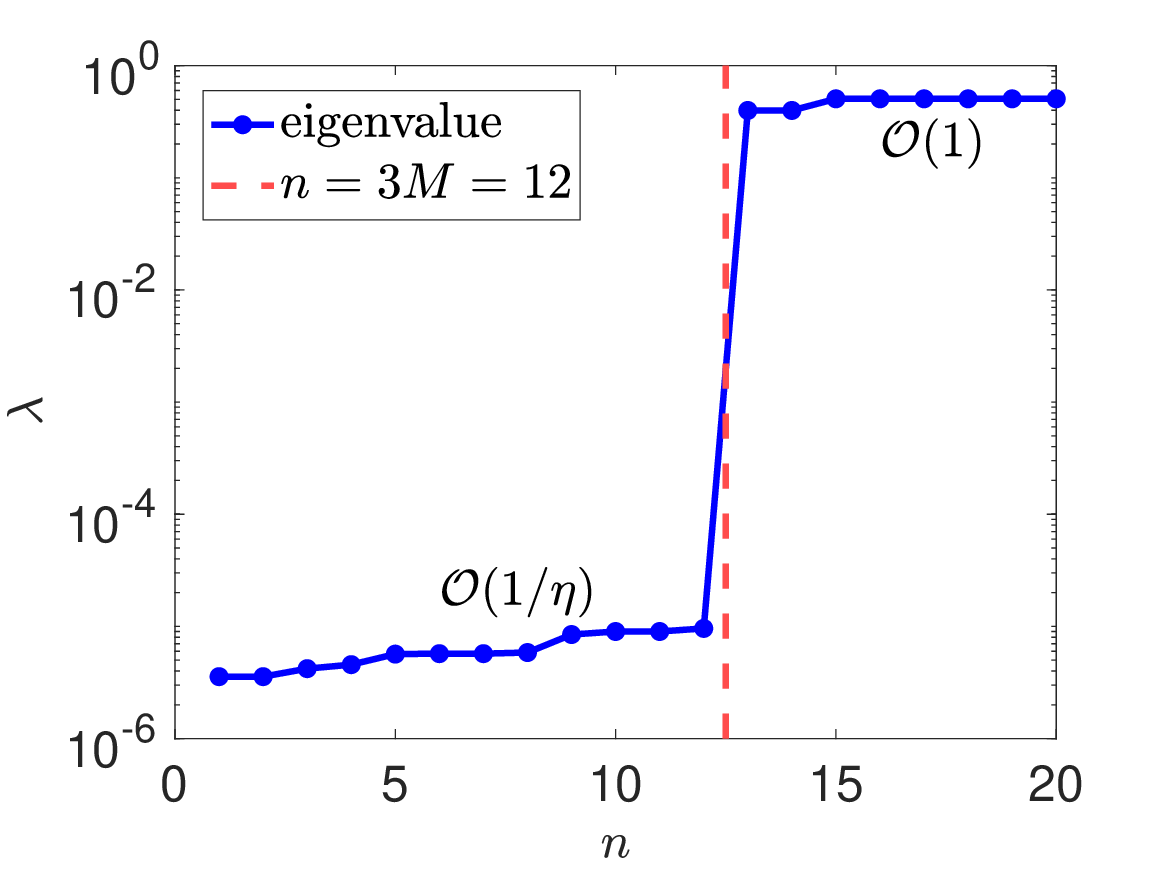}
        \caption{Linear elasticity ($R=3$)}
        \label{fig:2d_elas}
    \end{subfigure}
    \hfill
    \begin{subfigure}[b]{0.32\textwidth}
        \centering
        \includegraphics[width=\textwidth]{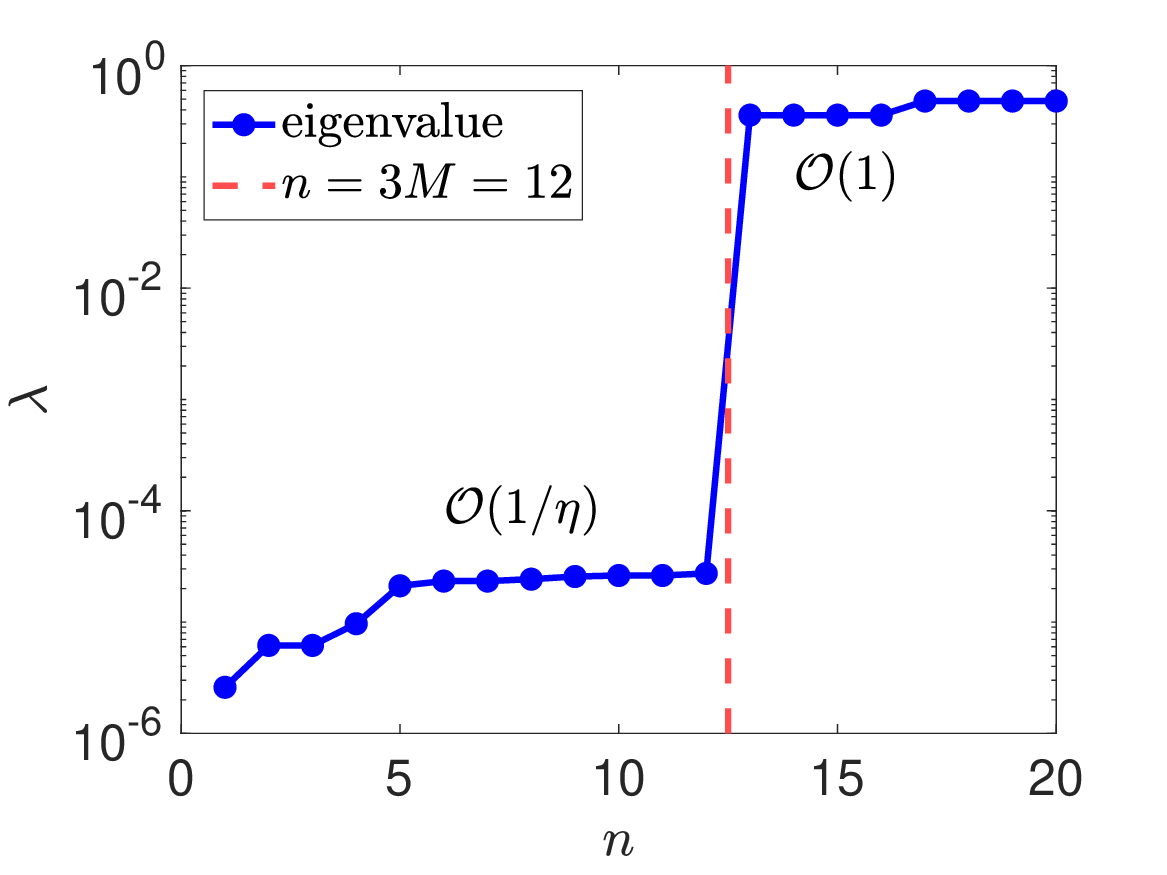}
        \caption{Fourth-order plate ($R=3$)}
        \label{fig:2d_plate}
    \end{subfigure}
\caption{The first 20 computed eigenvalues $\lambda_n$ for three different physical operators with $M=4$ high-contrast inclusions. The vertical dashed lines indicate the theoretically predicted position of the spectral gap at $n = R \times M + 1$.}
\label{fig:2d_comparison}
\end{figure}

The first 20 computed eigenvalues are presented in  \cref{fig:2d_comparison}. The results are interpreted as follows:
\begin{itemize}
    \item \textbf{Scalar diffusion:} The local kernel consists of constant functions, thus $R=1$. For $M=4$ inclusions, we expect $N = 1 \times 4 = 4$ small eigenvalues. As shown in \cref{fig:2d_diff}, a significant jump in the eigenvalue magnitude occurs between $\lambda_4$ and $\lambda_5$.
    \item \textbf{Linear elasticity:} The kernel consists of rigid body motions (two translations and one rotation), which yields $R=3$. In this case, $N = 3 \times 4 = 12$ eigenvalues are expected below the spectral gap. \cref{fig:2d_elas} confirms that $\lambda_{12}$ is close to zero while $\lambda_{13}$ is of the order $\mathcal{O}(1)$.
    \item \textbf{Fourth-order plate:} The kernel of the biharmonic operator is the space of linear polynomials $\{1, x, y\}$, which also has dimension $R=3$. Consequently, the spectral gap appears after $N = 12$ eigenvalues, which is consistent with the numerical result in \cref{fig:2d_plate}.
\end{itemize}

These 2D results provide strong evidence that the position of the topological spectral gap is universally determined by the product of the number of inclusions and the dimension of the local kernel, regardless of the specific operator type.

\subsection{Numerical verification in 3D}
To further demonstrate the universality of the proposed unified framework, we extend our numerical experiments to the three-dimensional case. We consider a cubic domain $\Omega = (0, 1)^3$ containing $M = 3$ disconnected high-contrast inclusions. The contrast ratio remains $\eta = 10^5$. 

In 3D, the dimension of the local kernel $R$ increases for vector-valued and higher-order problems. We examine the eigenvalue distributions for the same three classes of equations to verify if the spectral gap still obeys the rule $N = R \times M$.

\begin{figure}[htbp]
    \centering
    \begin{subfigure}[b]{0.32\textwidth}
        \centering
        \includegraphics[width=\textwidth]{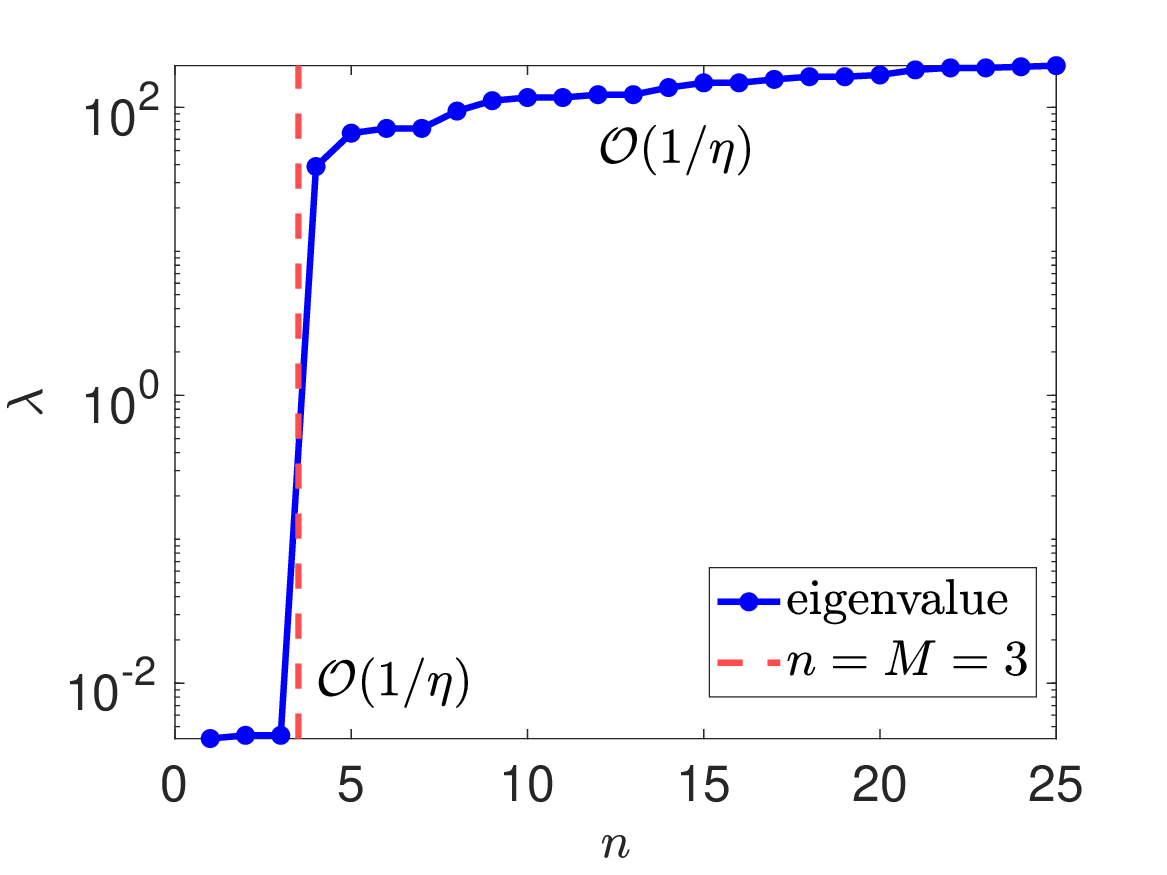}
        \caption{Scalar diffusion ($R=1$)}
        \label{fig:3d_diff}
    \end{subfigure}
    \hfill
    \begin{subfigure}[b]{0.32\textwidth}
        \centering
        \includegraphics[width=\textwidth]{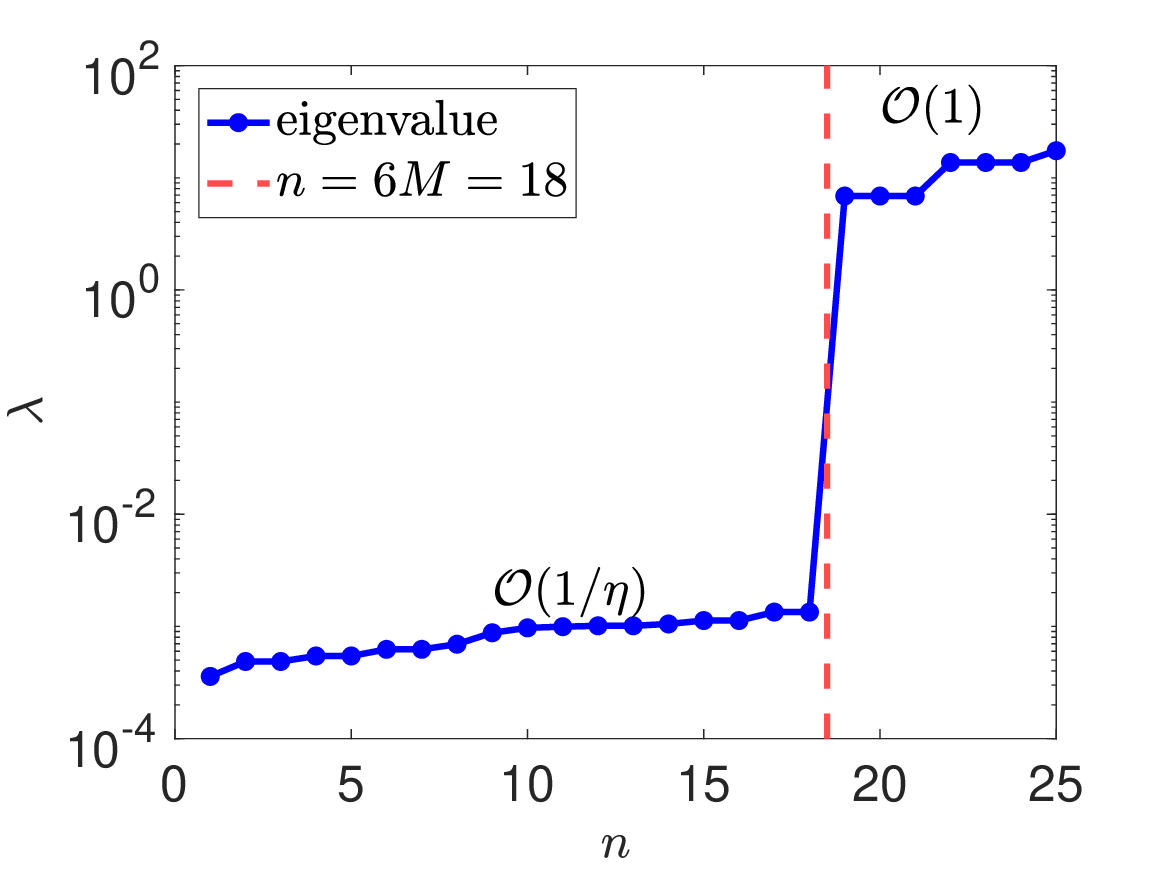}
        \caption{Linear elasticity ($R=6$)}
        \label{fig:3d_elas}
    \end{subfigure}
    \hfill
    \begin{subfigure}[b]{0.32\textwidth}
        \centering
        \includegraphics[width=\textwidth]{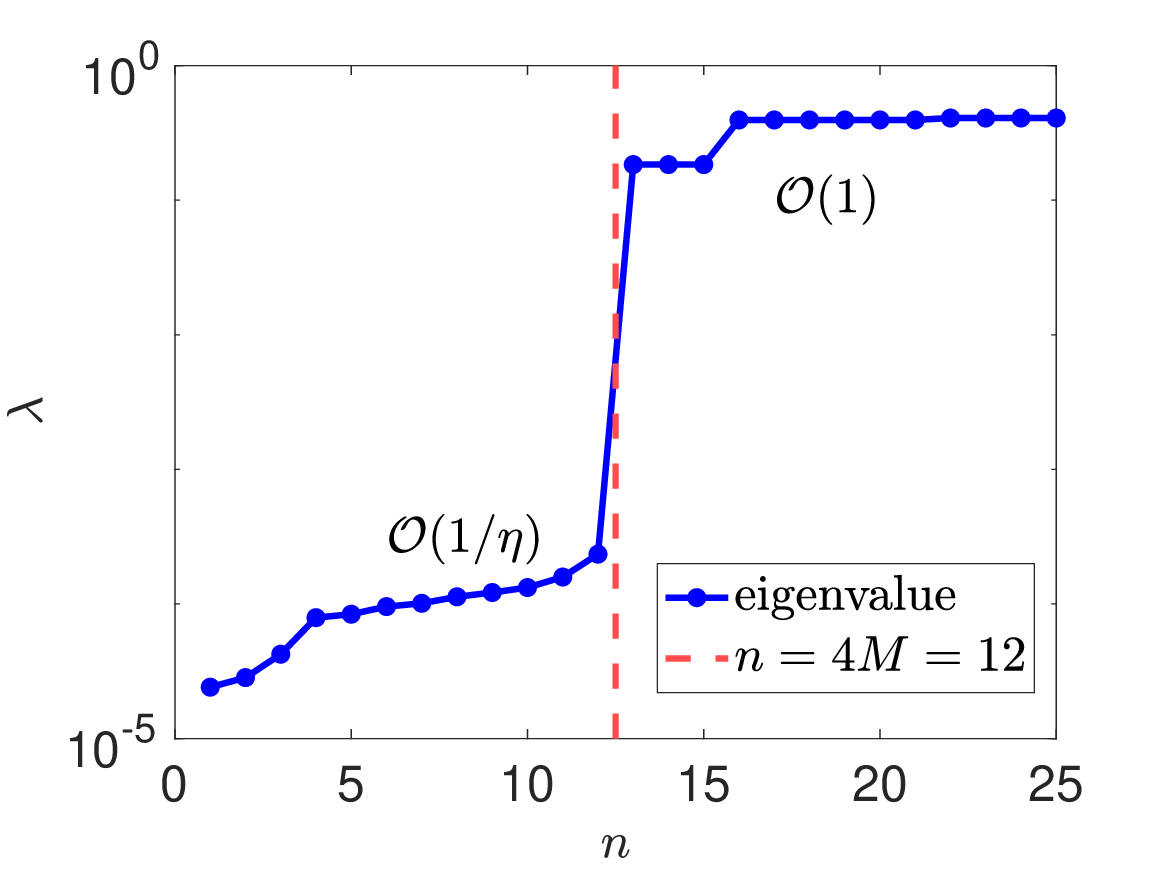}
        \caption{Fourth-order plate ($R=4$)}
        \label{fig:3d_plate}
    \end{subfigure}
    \caption{The first 25 computed eigenvalues $\lambda_n$ in 3D with $M=3$ inclusions. The vertical dashed lines represent the predicted spectral gap positions based on the dimension of the 3D local kernels.}
    \label{fig:3d_comparison}
\end{figure}

The numerical results illustrated in \cref{fig:3d_comparison} confirm the following:
\begin{itemize}
    \item \textbf{Scalar diffusion:} The kernel still consists of constant functions, so $R=1$. With $M=3$, the gap remains at $N=3$, as shown in \cref{fig:3d_diff}.
    \item \textbf{Linear elasticity:} In 3D, the space of rigid body motions is spanned by three translations and three rotations, leading to $R=6$. Our theory predicts $N = 6 \times 3 = 18$ small eigenvalues. \cref{fig:3d_elas} clearly shows that the first 18 eigenvalues are near zero, with a sharp jump at $n=19$.
    \item \textbf{Fourth-order plate:} The kernel of the 3D biharmonic operator is the space of linear polynomials $\{1, x, y, z\}$, which has dimension $R=4$. Consequently, we observe $N = 4 \times 3 = 12$ eigenvalues below the spectral gap in \cref{fig:3d_plate}.
\end{itemize}

The perfect agreement between the 3D numerical results and our theoretical predictions confirms that the topological spectral gap is a fundamental property determined by the interaction between the number of inclusions $M$ and the local kernel dimension $R$.

\subsection{Verification of topological spectral gap: Maxwell and grad-div}
In this numerical experiment, we investigate the spectral behavior of systems with infinite-dimensional kernels, specifically the time-harmonic Maxwell's equations and grad-div operator, as discussed in \cref{sec:derham}. Unlike the previous cases governed by finite-dimensional local kernels (e.g., constants or polynomials), the spectral gaps for these systems are determined by the topological invariants of the inclusions—the Betti numbers $\beta_{k}$. To filter out the redundant infinite-dimensional kernels, we perform computations in the gauge-fixed spaces, which restricts the null space to the finite-dimensional harmonic spaces, maintaining a high contrast ratio of $\eta=10^{5}$.
To validate the topological spectral gap theorem (\cref{thm:homo_spectral_gap}) comprehensively, we design two test cases with varying topological complexities.

\textbf{Test case 1: simple uniform topology.} We first consider a cubic domain $\Omega=(0,1)^{3}$ with $M=2$ high-contrast inclusions, each possessing a simple non-trivial topology of $\beta_{k}(D_{i})=1$:
\begin{itemize}
    \item For the Maxwell system (1-Forms), we select two inclusions with the first Betti number $\beta_{1}(D_{i})=1$ (e.g., two standard tori).
    \item For the grad-div operator (2-Forms), we select two inclusions with the second Betti number $\beta_{2}(D_{i})=1$ (e.g., two shells with a single internal cavity).
\end{itemize}

\begin{figure}[htbp]
    \centering
    \begin{subfigure}[b]{0.48\textwidth}
        \centering
        \includegraphics[width=\textwidth]{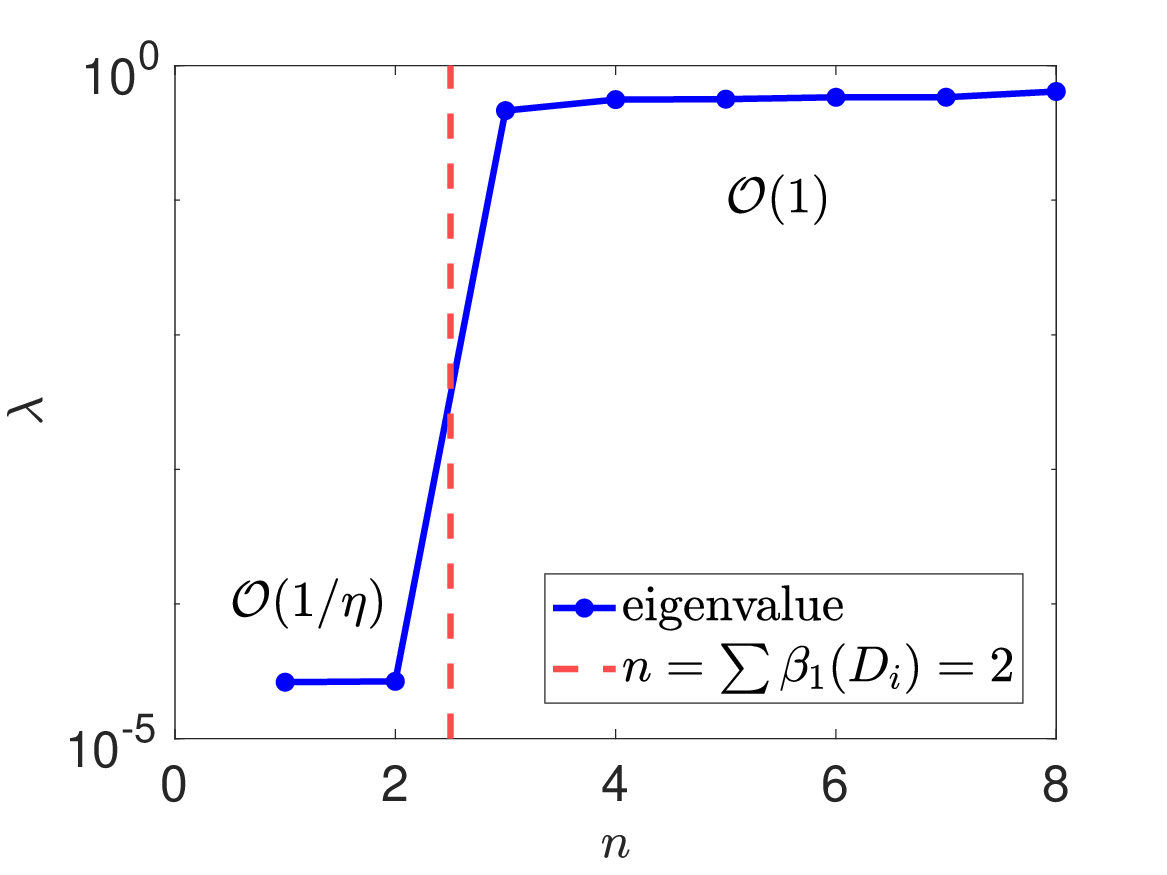}
        \caption{Maxwell's equations }
        \label{fig:3d_maxwell}
    \end{subfigure}
    \hfill
    \begin{subfigure}[b]{0.48\textwidth}
        \centering
        \includegraphics[width=\textwidth]{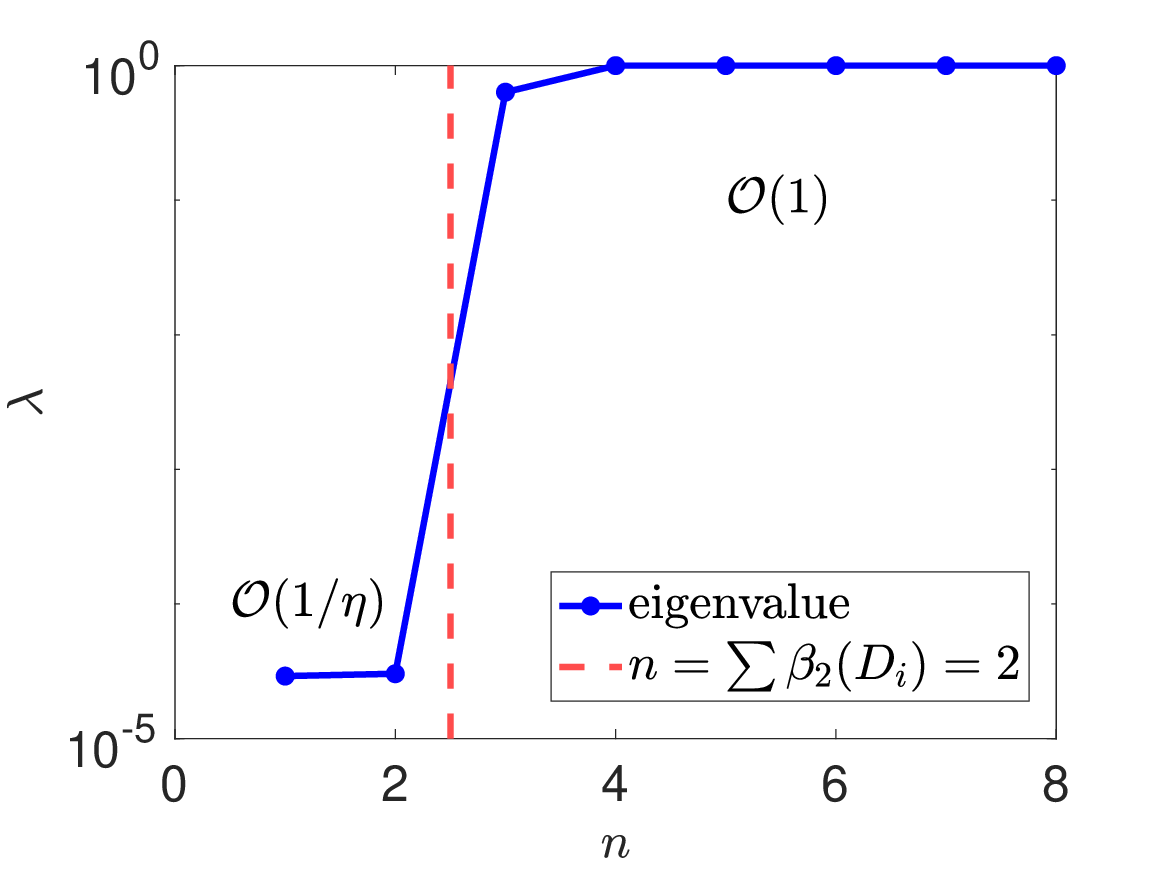}
        \caption{Grad-div operator}
        \label{fig:3d_darcy}
    \end{subfigure}
    \caption{The first 8 computed eigenvalues for Maxwell and grad-div systems in 3D. In both cases, with $M=2$ and local Betti numbers $\beta_k = 1$, the topological spectral gap emerges at $n = \sum_{i} \beta_k(D_i) + 1 = 3$.}
    \label{fig:homological_verification}
\end{figure}

As shown in \cref{fig:homological_verification}, exactly $N_{topo} = 1+1=2$ eigenvalues collapse to zero, scaling as $\mathcal{O}(\eta^{-1})$. The spectral gap emerges at $n=3$, bounded below by a generalized coercivity constant independent of $\eta$.

\textbf{Test case 2: heterogeneous complex topology.} To further validate the topological spectral gap theorem under heterogeneous and higher-order topological complexities, we construct a second test case with $M=2$ inclusions possessing mixed Betti numbers: $\beta_{k}(D_{1}) = 2$ and $\beta_{k}(D_{2}) = 3$.
\begin{itemize}
    \item For the Maxwell system, this corresponds to structures with multiple topological tunnels (e.g., a figure-eight torus and a triple torus).
    \item For the grad-div operator, this is realized by solid blocks containing multiple completely isolated internal cavities (two and three cavities, respectively).
\end{itemize}

\begin{figure}[htbp]
    \centering
    \begin{subfigure}{0.48\textwidth}
        \centering
        \includegraphics[width=\linewidth]{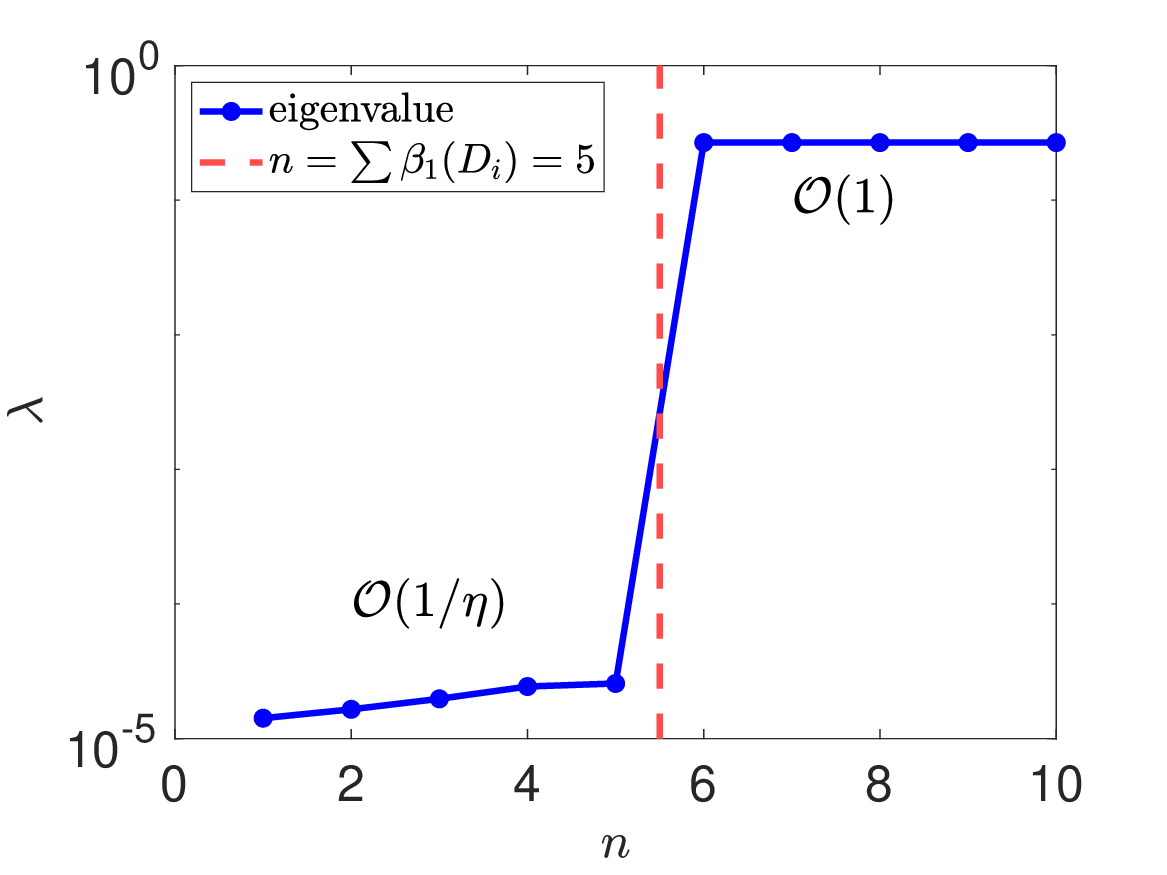}
        \caption{Maxwell's equations }
    \end{subfigure}\hfill
    \begin{subfigure}{0.48\textwidth}
        \centering
        \includegraphics[width=\linewidth]{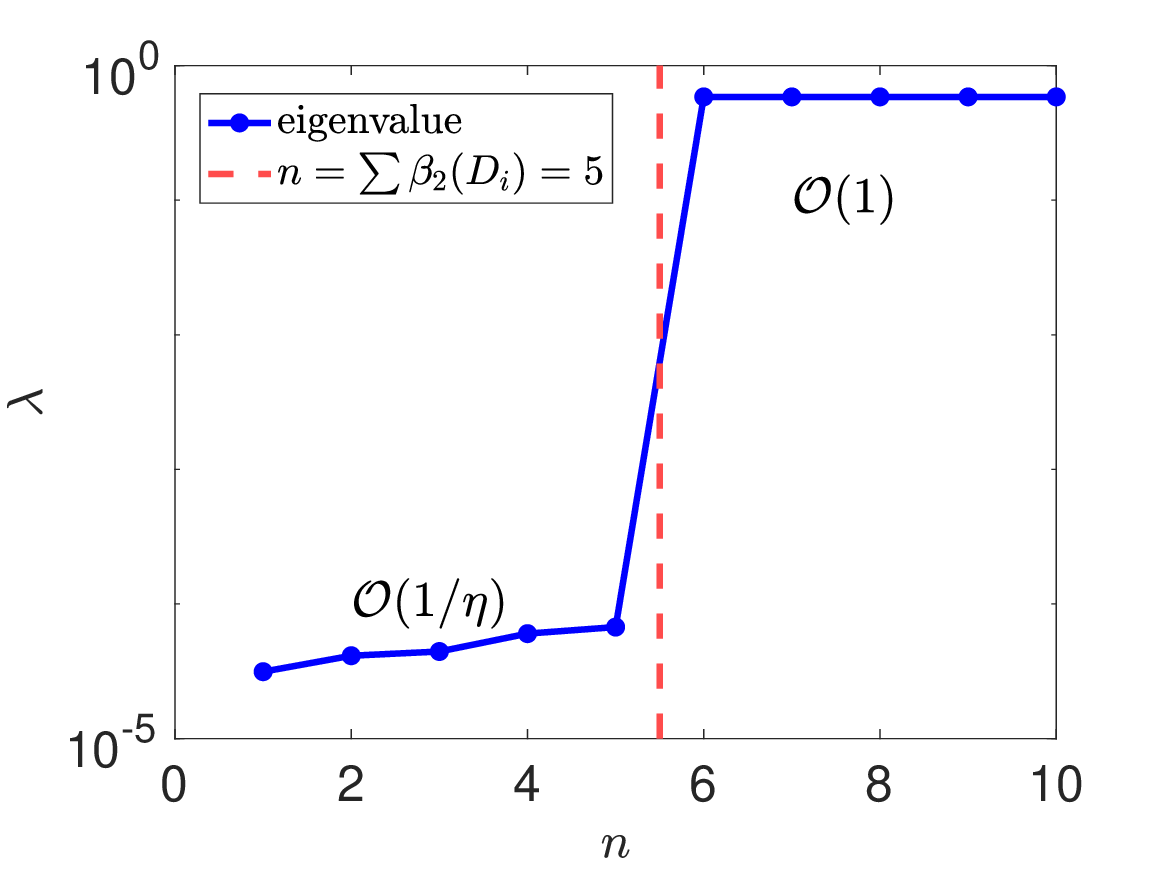}
        \caption{Grad-div operator}
    \end{subfigure}
    \caption{The first 10 computed eigenvalues for Maxwell and grad-div systems with highly complex heterogeneous topologies. With the local Betti numbers being 2 and 3, the topological spectral gap emerges at $n = \sum \beta_k + 1 = 6$.}
    \label{fig:complex_topology}
\end{figure}

According to \cref{thm:homo_spectral_gap}, the topological invariant is $N_{topo} = \sum_{i=1}^{2}\beta_{k}(D_{i}) = 2 + 3 = 5$. The first 10 computed eigenvalues are presented in \cref{fig:complex_topology}. For both physical systems, exactly 5 eigenvalues fall into the $\mathcal{O}(\eta^{-1})$ band. 
The generalized coercivity bound ensures that the spectral gap emerges at the 6-th mode ($n=6$), meaning that this eigenvalue is of order $\mathcal{O}(1)$ and independent of the contrast parameter $\eta$.

These results provide strong numerical evidence for the topological rule $N_{topo}=\sum_{i}\beta_{k}(D_{i})$. They confirm that the spectral behavior is exclusively governed by the total topological invariant of the system, remaining completely robust against variations in the geometric complexity or local topological features of individual inclusions.

\subsection{Spectral decoupling and the multiscale Weyl's law}
In this subsection, we numerically verify the asymptotic spectral decoupling theory proposed in \cref{sec:spec_decoupling}. The experiment is conducted on a 2D square domain $\Omega$ containing $M=4$ disconnected high-contrast inclusions $D_i$. To explicitly demonstrate the asymptotic convergence established in \cref{theorem:spec_decou}, we systematically vary the contrast parameter across several orders of magnitude, specifically $\eta \in \{10^1, 10^2, 10^3, 10^4\}$. According to our theoretical analysis, as the contrast $\eta \rightarrow \infty$, the spectrum of the full multiscale system decouples and asymptotically approaches the union of the macroscopic background Dirichlet spectrum $\sigma(\mathcal{L}_{\Omega_0}^{Dir})$ and the internal Neumann spectrum of the inclusions $\sigma(\mathcal{L}_{D}^{Neu})$.

\begin{figure}[htbp]
    \centering
    \begin{subfigure}{0.48\textwidth}
        \includegraphics[width=\textwidth]{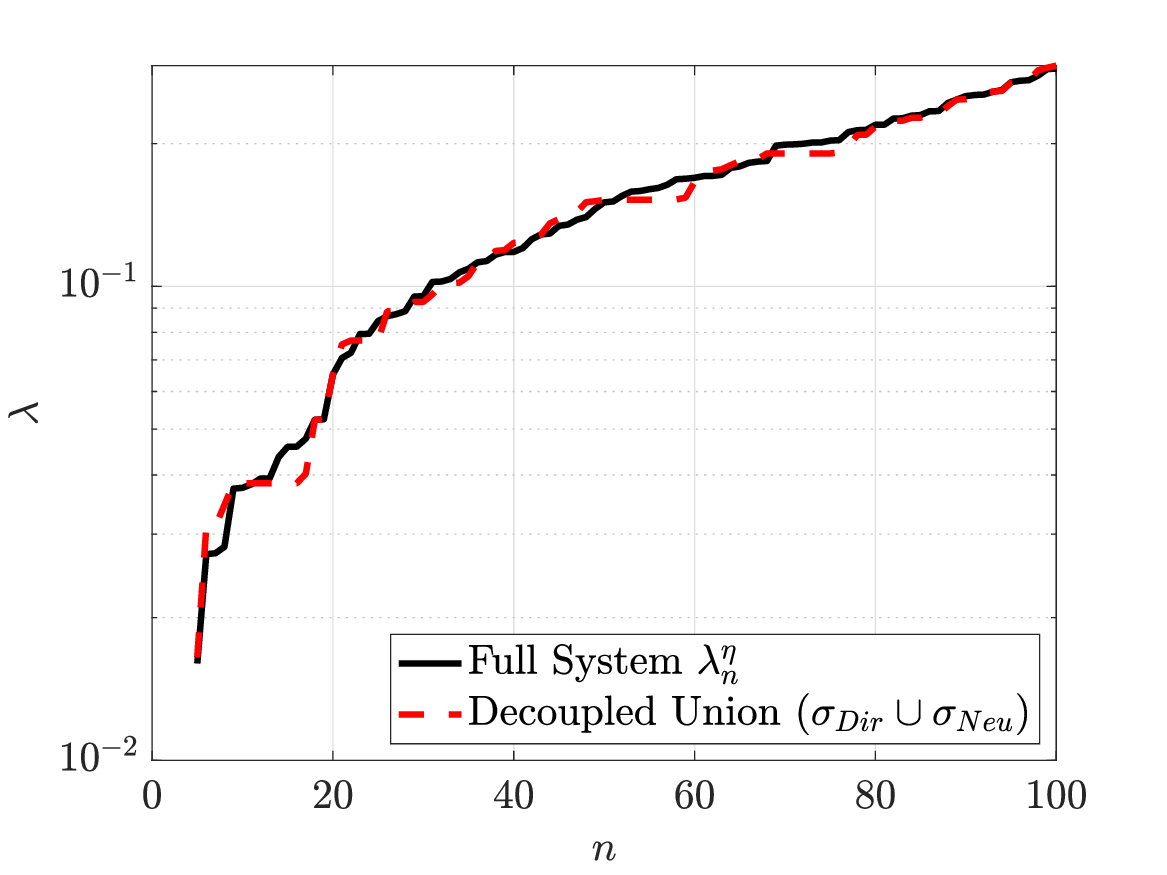}
        \caption{$\eta = 10^1$}
        \label{fig:sd1}
    \end{subfigure}
    \hfill
    \begin{subfigure}{0.48\textwidth}
        \includegraphics[width=\textwidth]{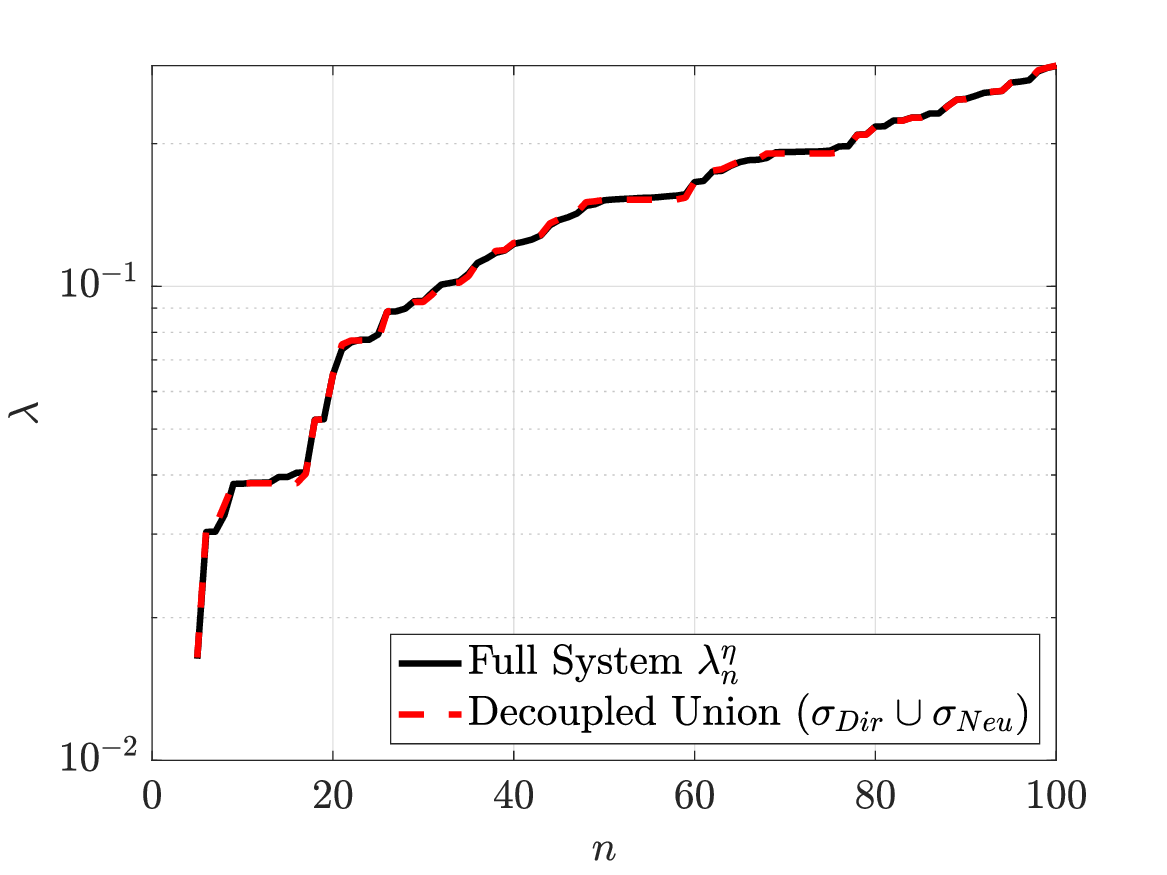}
        \caption{$\eta = 10^2$}
        \label{fig:sd2}
    \end{subfigure}
    
    \vspace{0.5cm}
    
    \begin{subfigure}{0.48\textwidth}
        \includegraphics[width=\textwidth]{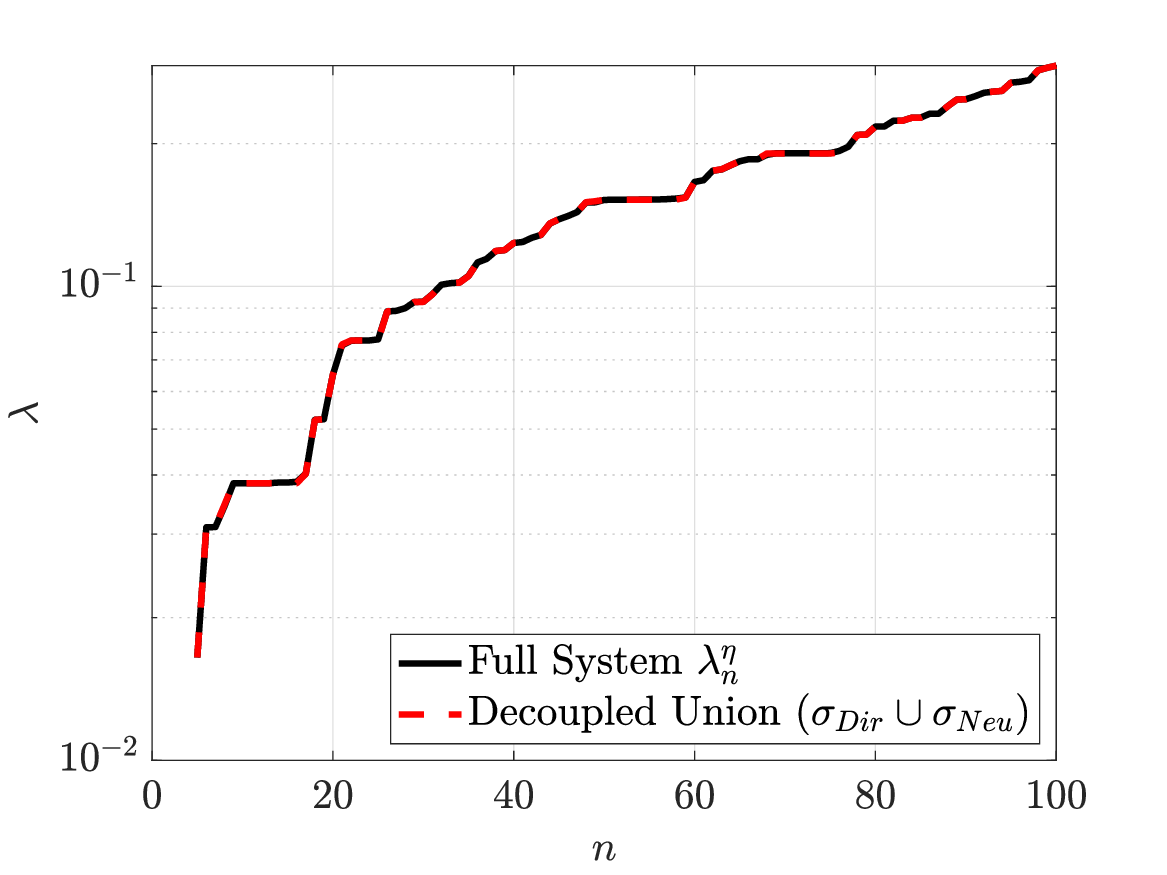}
        \caption{$\eta = 10^3$}
        \label{fig:sd3}
    \end{subfigure}
    \hfill
    \begin{subfigure}{0.48\textwidth}
        \includegraphics[width=\textwidth]{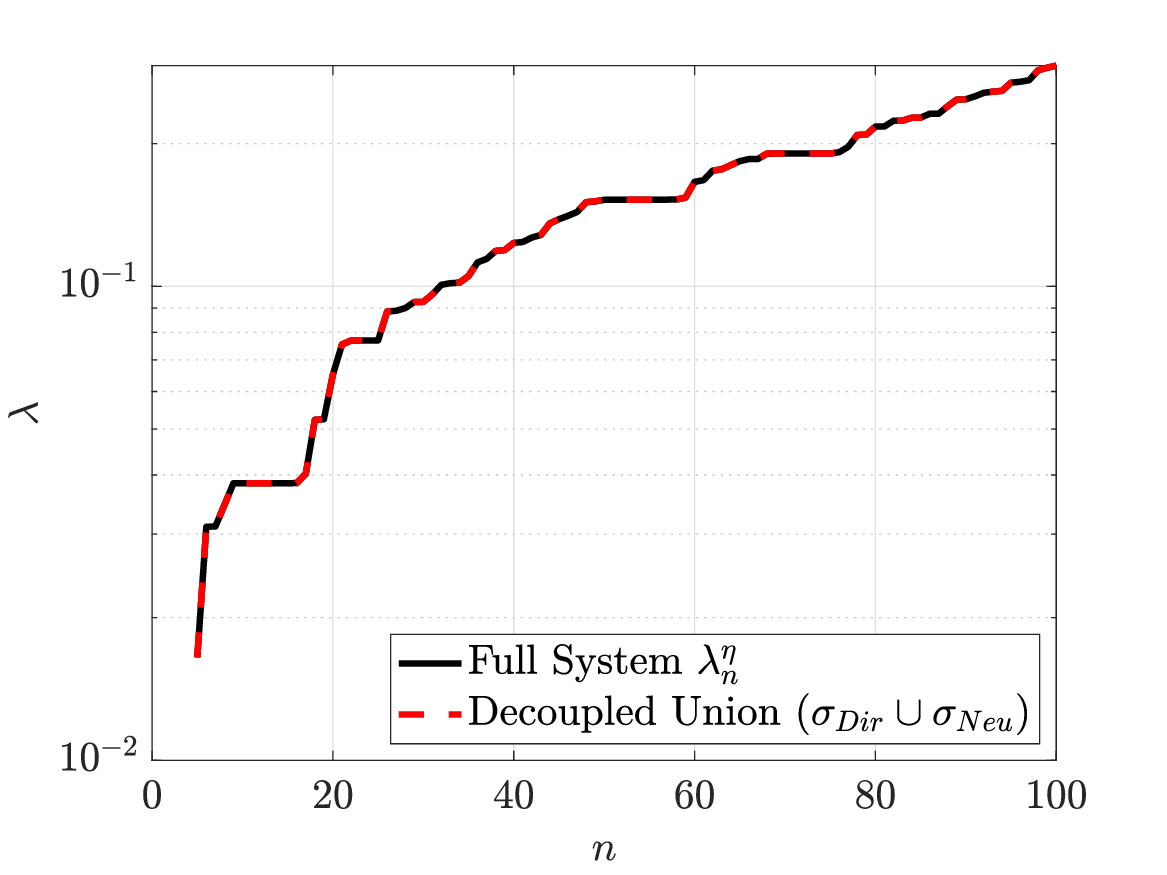}
        \caption{$\eta = 10^4$}
        \label{fig:sd4}
    \end{subfigure}
    \caption{Comparison of the eigenvalue sequence between the full multiscale system and the decoupled union spectrum for varying contrast parameters $\eta \in \{10^1, 10^2, 10^3, 10^4\}$. }
    \label{fig:spectral_decoupling}
\end{figure}

\cref{fig:spectral_decoupling} tracks the evolution of the eigenvalue sequence $\lambda_n^\eta$ of the full system for different values of $\eta$, comparing them with the theoretical decoupled union spectrum. 
For lower contrast values (e.g., $\eta = 10^1$), the multiscale eigenvalues exhibit noticeable deviations from the decoupled limit due to strong wave transmission across the interfaces. However, as the contrast systematically increases towards $\eta = 10^4$, the full system's eigenvalue curve uniformly converges to and tightly aligns with the theoretical decoupled spectrum. This dynamic sequence firmly confirms that extreme high-contrast completely cuts off the wave transmission across the interfaces, rendering the background matrix and internal inclusions mathematically independent.

\begin{figure}[htbp]
    \centering
    \begin{subfigure}{0.48\textwidth}
        \includegraphics[width=\textwidth]{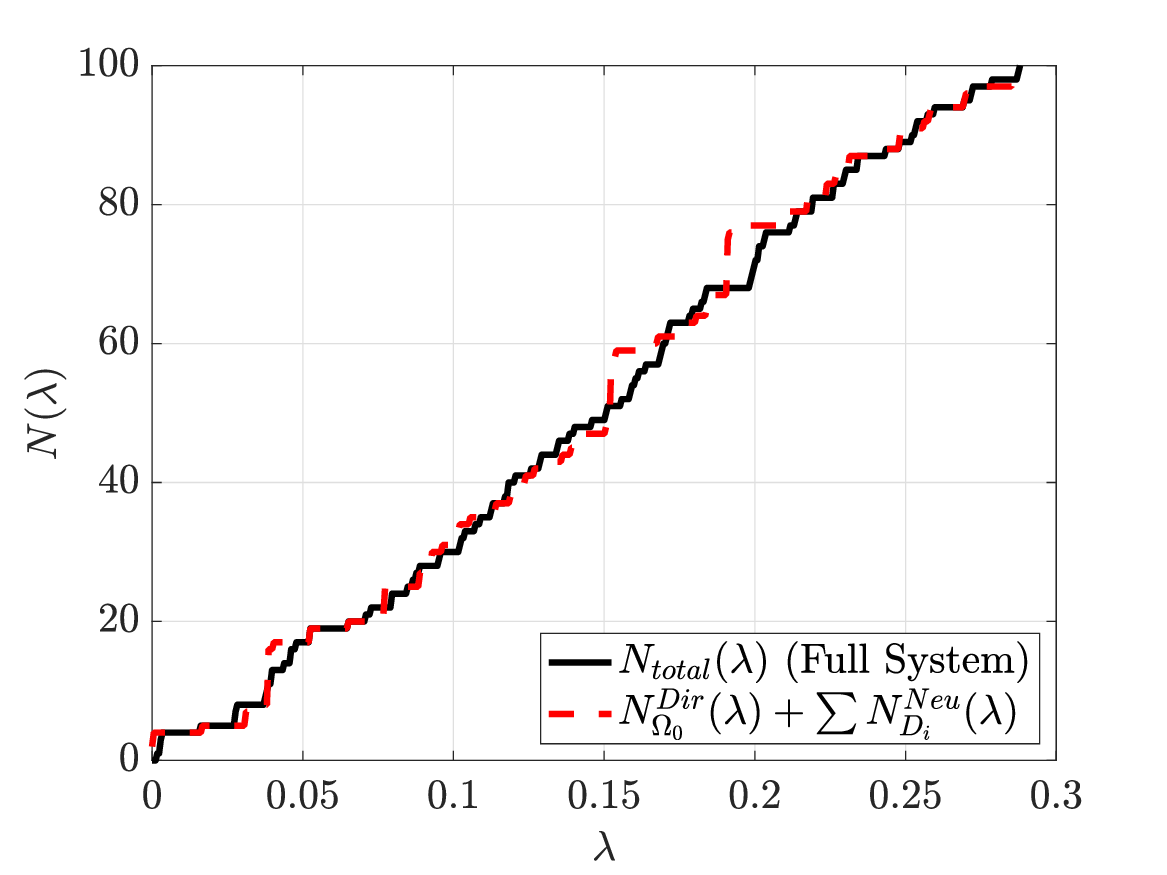}
        \caption{$\eta = 10^1$}
        \label{fig:gw1}
    \end{subfigure}
    \hfill
    \begin{subfigure}{0.48\textwidth}
        \includegraphics[width=\textwidth]{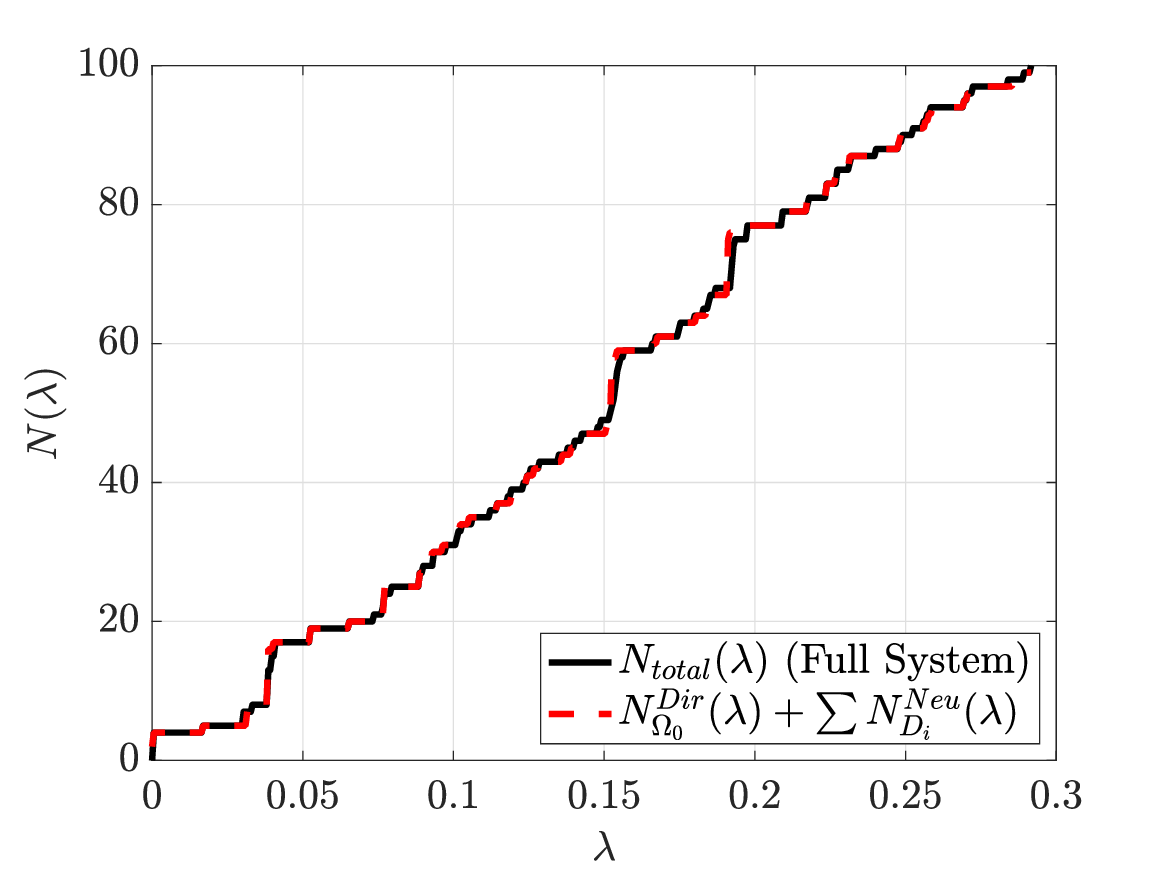}
        \caption{$\eta = 10^2$}
        \label{fig:gw2}
    \end{subfigure}
    
    \vspace{0.5cm}
    
    \begin{subfigure}{0.48\textwidth}
        \includegraphics[width=\textwidth]{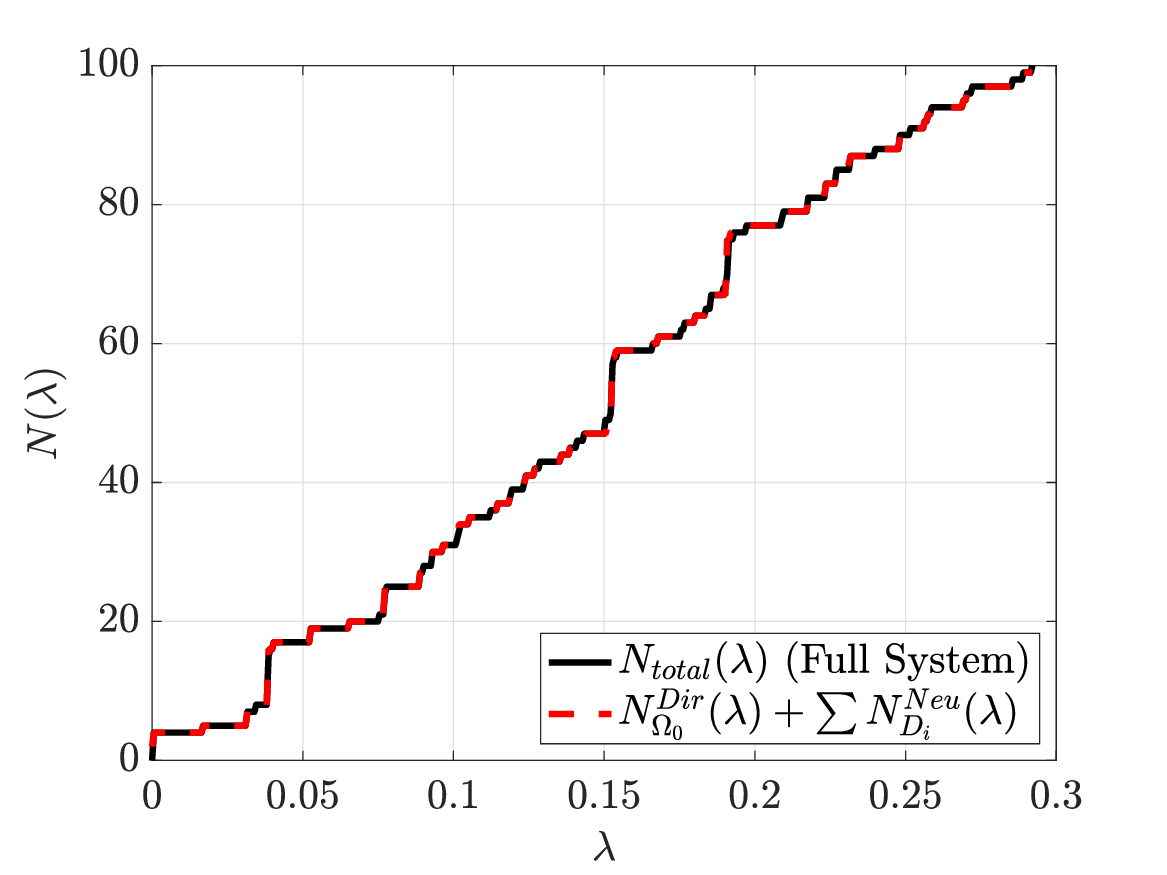}
        \caption{$\eta = 10^3$}
        \label{fig:gw3}
    \end{subfigure}
    \hfill
    \begin{subfigure}{0.48\textwidth}
        \includegraphics[width=\textwidth]{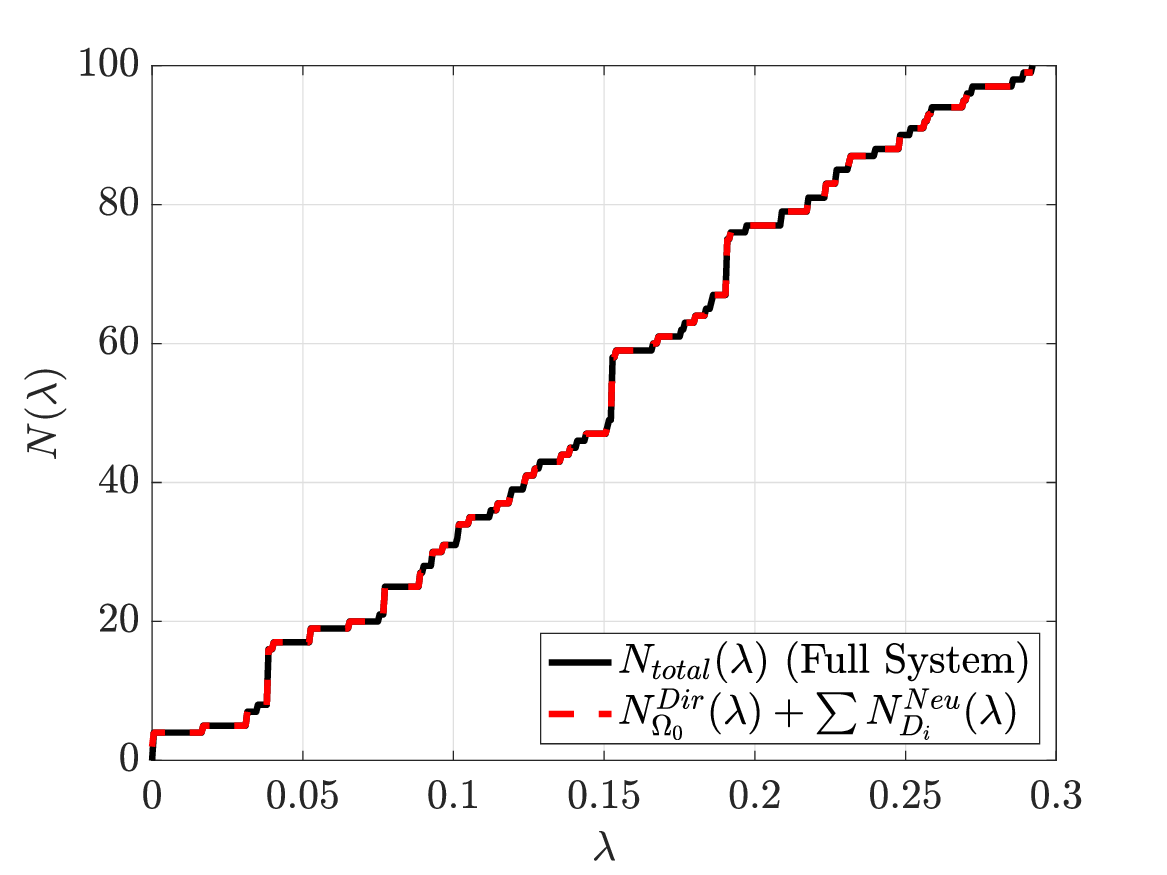}
        \caption{$\eta = 10^4$}
        \label{fig:gw4}
    \end{subfigure}
    \caption{Comparison of the eigenvalue counting functions $N(\lambda)$ demonstrating the asymptotic adherence to the generalized Weyl's law. }
    \label{fig:generalized_weyl}
\end{figure}

To further validate this spectral distribution macroscopically, we examine the eigenvalue counting function $N(\lambda) = \#\{n : \lambda_n \le \lambda\}$. \cref{fig:generalized_weyl} plots the counting function curves for the varying contrast parameters against the decoupled superposition $N_{\Omega_0}^{Dir}(\lambda) + \sum_{i=1}^M N_{D_i}^{Neu}(\lambda)$. The numerical results demonstrate a clear asymptotic convergence: the deviation between the full system's step-curve and the theoretical limit vanishes as $\eta$ grows. At $\eta = 10^4$, the curves virtually coincide across the entire spectral range. This rigorously verifies the spatial decoupling of high-frequency modes in high-contrast media and confirms that their linear superposition perfectly conforms to the multiscale Weyl's law.

\subsection{Asymptotic spectral gap and completeness}
In this subsection, we numerically verify the asymptotic spectral gap behavior and the spectral completeness with respect to the periodicity scale $\varepsilon$, as analyzed in \cref{sec:homogenization}. We consider the 2D scalar diffusion problem on a square macroscopic domain $\Omega = (0,1)^2$. The contrast parameter is $\eta = 10^5$. To observe the effect of vanishing periodicity, we vary the periodicity parameter $\varepsilon \in \{1/4, 1/8, 1/16, 1/32\}$.

\begin{figure}[htbp]
    \centering
    \begin{subfigure}{0.48\textwidth}
        \centering
        \includegraphics[width=\textwidth]{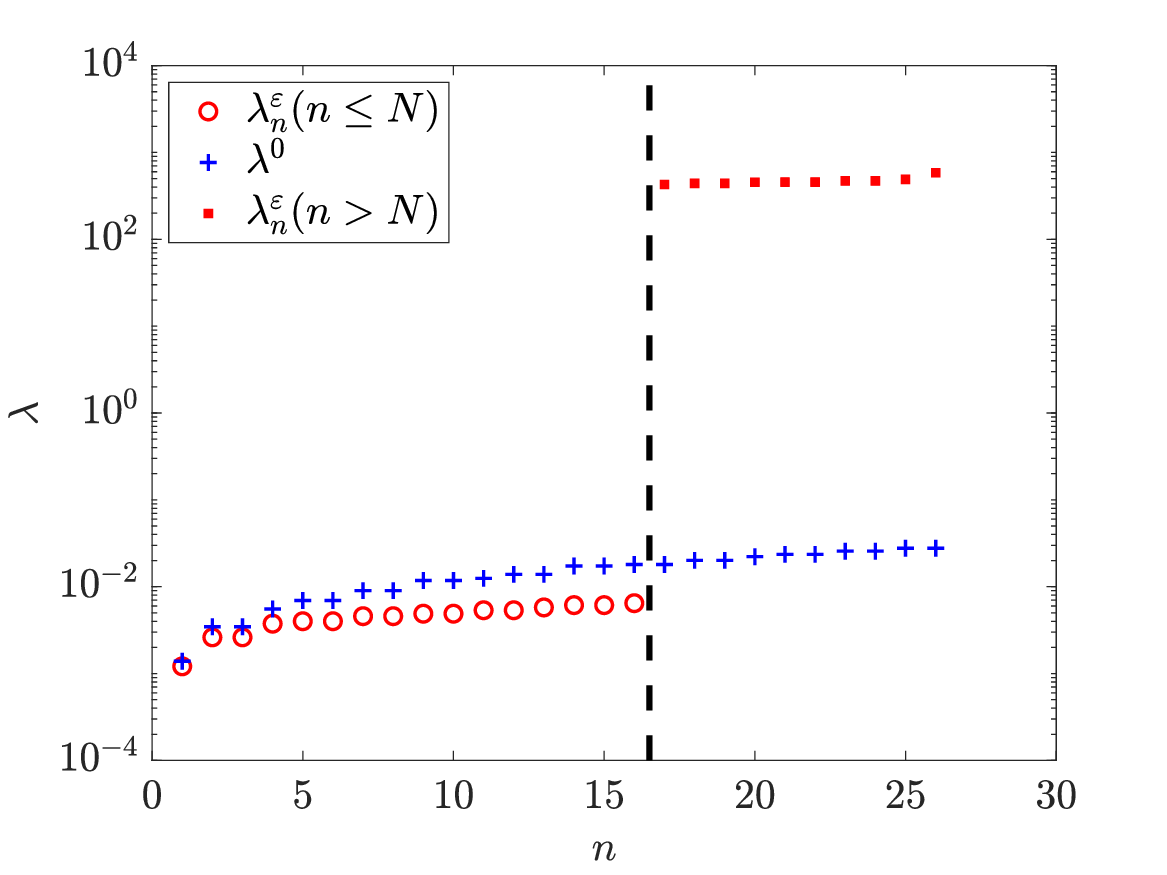}
        \caption{$\varepsilon = 1/4$ }
    \end{subfigure}
    \hfill
    \begin{subfigure}{0.48\textwidth}
        \centering
        \includegraphics[width=\textwidth]{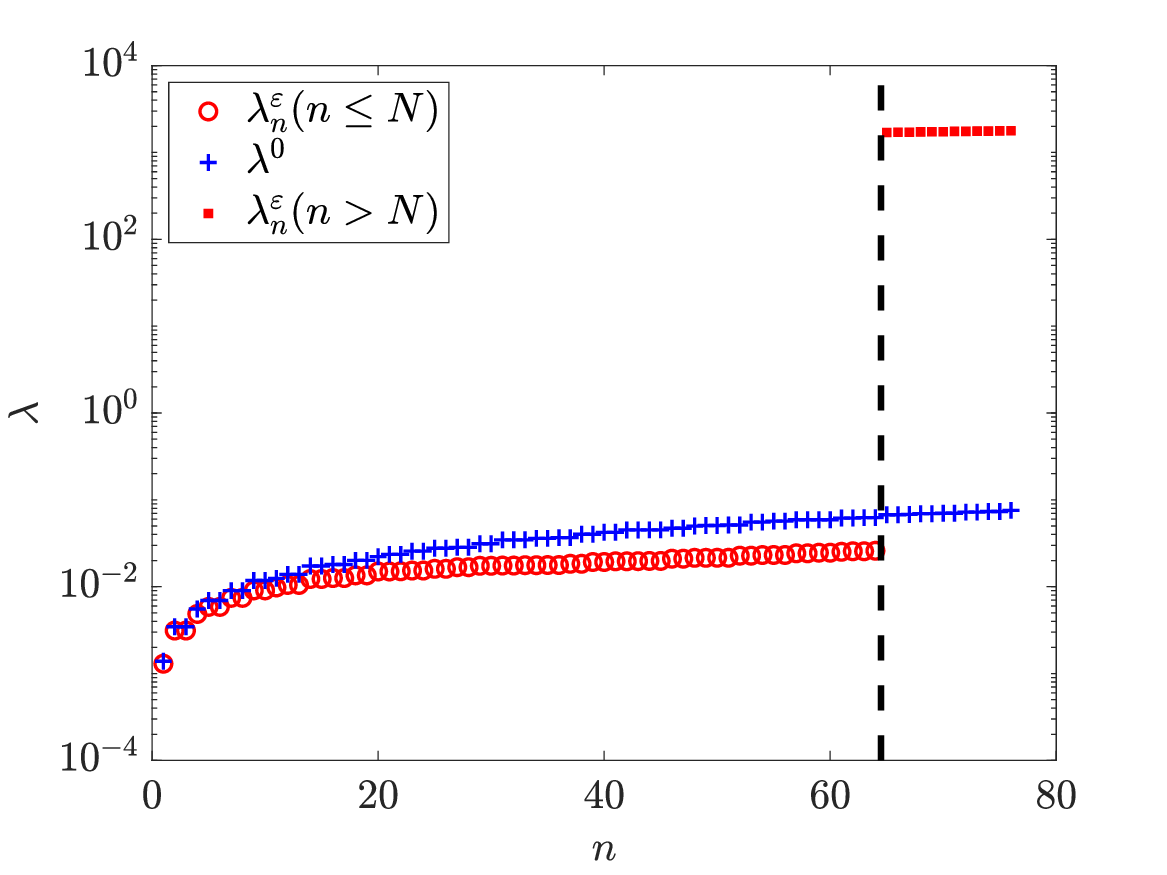}
        \caption{$\varepsilon = 1/8$ }
    \end{subfigure}
    \vspace{0.5cm}
    \begin{subfigure}{0.48\textwidth}
        \centering
        \includegraphics[width=\textwidth]{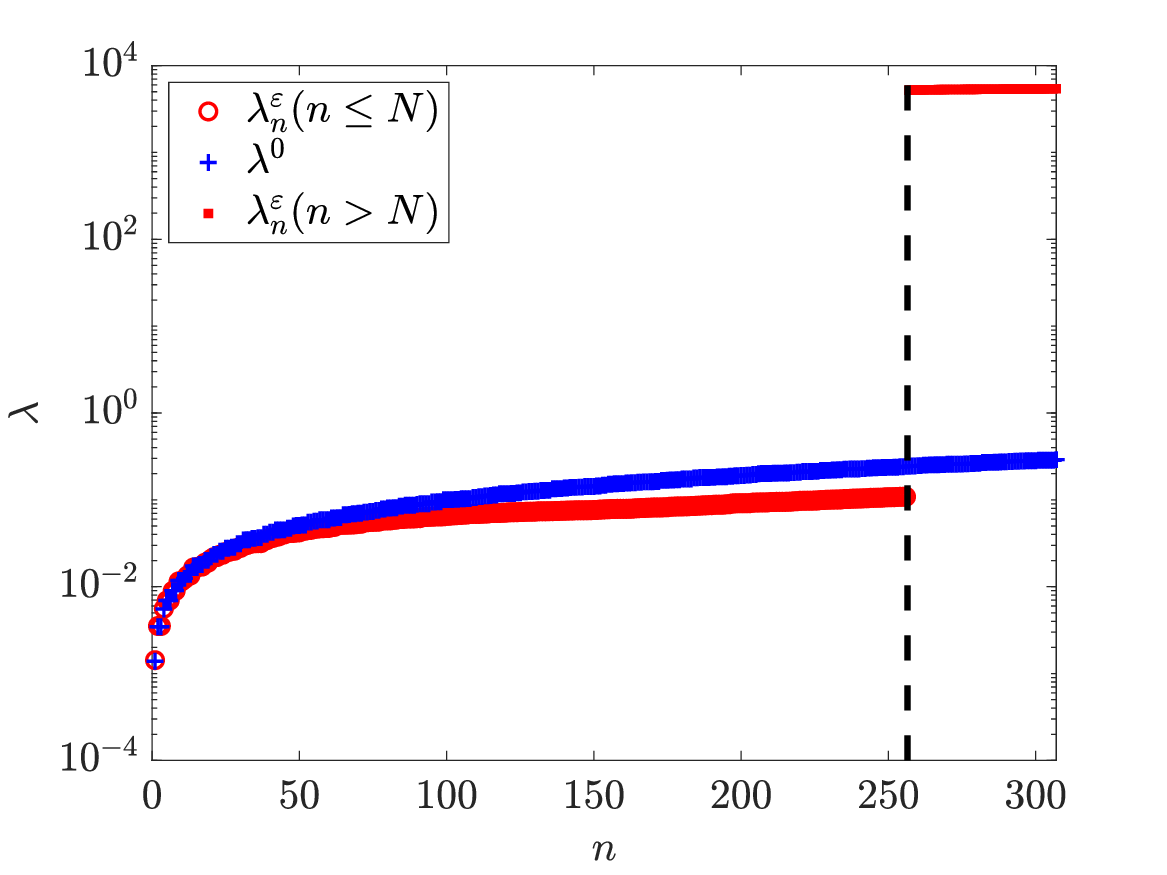}
        \caption{$\varepsilon = 1/16$ }
    \end{subfigure}
    \hfill
    \begin{subfigure}{0.48\textwidth}
        \centering
        \includegraphics[width=\textwidth]{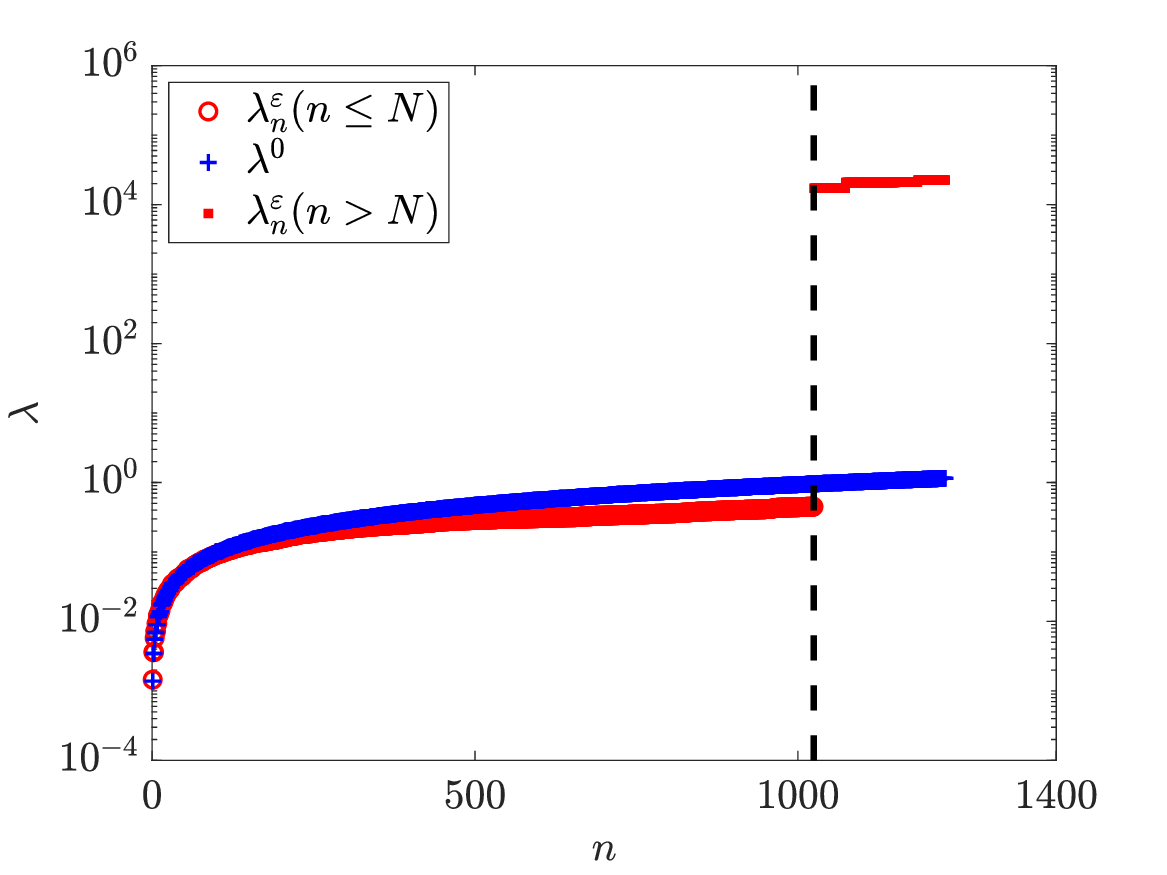}
        \caption{$\varepsilon = 1/32$ }
    \end{subfigure}
    \caption{Distribution of the eigenvalues for the multiscale operator with varying periodicity scales $\varepsilon \in \{1/4, 1/8, 1/16, 1/32\}$ and a fixed contrast $\eta = 10^5$.}
    \label{fig:varying_epsilon}
\end{figure}

\cref{fig:varying_epsilon} displays the eigenvalue distributions for the four different values of $\varepsilon$. The numerical results demonstrate the following key observations:

\begin{itemize}
    \item \textbf{Robust band separation and eigenvalue scaling:} For all tested values of $\varepsilon$, a spectral gap strictly emerges at $n = N(\varepsilon) + 1$. Both the low-frequency band and the high-frequency band shift upwards as $\varepsilon$ decreases. This behavior perfectly matches the $\varepsilon^{-2}$ gradient energy scaling, confirming that the relative spectral gap ratio remains invariant across all spatial scales.
    \item \textbf{Expansion of the topological eigenspace:} When the periodicity scale $\varepsilon$ decreases from $1/4$ to $1/32$, the number of modes $N(\varepsilon)$ located below the spectral gap increases quadratically, which agrees with the theoretical prediction $M(\varepsilon) \approx |\Omega|\varepsilon^{-2}$.
    \item \textbf{Spectral completeness:} The continuous rightward shift of the spectral gap in \cref{fig:varying_epsilon} provides direct numerical evidence for spectral completeness. For any fixed macroscopic mode index $n$ (e.g., $n=100$), there exists a critical scale $\varepsilon_0$ such that for all $\varepsilon < \varepsilon_0$, the $n$-th mode falls strictly within the low-frequency band $n \le N(\varepsilon)$. This visually confirms that the expanding topological subspace completely converges to the entire macroscopic spectrum as $\varepsilon \to 0$.
\end{itemize}

\section{Conclusions}
\label{sec:conclu}
In this paper, we presented a unified variational and topological framework to systematically characterize the spectral gaps and eigenvalue distributions of high-contrast multiscale PDEs. For standard operators with finite-dimensional kernels (e.g., scalar diffusion, linear elasticity, and fourth-order plates), we explicitly linked the spectral gap to the the dimension of the local null space and established strict lower bounds via classical coercivity inequalities. To address systems with infinite-dimensional kernels, such as Maxwell and grad-div operator, we employed De Rham complexes and a generalized Hodge decomposition to rigorously prove that the exact location of the universal topological spectral gap is completely determined by the aggregate Betti numbers of the inclusions. Furthermore, we established a spectral decoupling theorem based on a mass concentration dichotomy to formulate a new multiscale Weyl's law. Finally, we demonstrated that the expanding low-frequency topological eigenspace asymptotically and completely spans the entire macroscopic homogenized spectrum in the limit of vanishing periodicity.

Although this study provides a rigorous analysis of the asymptotic spectral behavior in high-contrast media, determining the explicit convergence rates of these eigenvalues remains a challenging open problem that will require the future development of error estimates. Another important direction for future research involves the quantitative study of the spectral measure. Exploring the geometric shape of the spectral measure, the detailed band-gap structures, and the topological and geometric dependencies of the spectral distribution will not only deepen the mathematical understanding of high-contrast PDE spectral theory, but also provide crucial theoretical guidance for designing next-generation multiscale model reduction techniques and robust domain decomposition preconditioners.

\bibliographystyle{siamplain}
\bibliography{references}
\end{document}